\DeclareFontFamily{U}{matha}{\hyphenchar\font45}
\DeclareFontShape{U}{matha}{m}{n}{
      <5> <6> <7> <8> <9> <10> gen * matha
      <10.95> matha10 <12> <14.4> <17.28> <20.74> <24.88> matha12
      }{}
\DeclareSymbolFont{matha}{U}{matha}{m}{n}
\DeclareFontFamily{U}{mathx}{\hyphenchar\font45}
\DeclareFontShape{U}{mathx}{m}{n}{
      <5> <6> <7> <8> <9> <10>
      <10.95> <12> <14.4> <17.28> <20.74> <24.88>
      mathx10
      }{}
\DeclareSymbolFont{mathx}{U}{mathx}{m}{n}
\DeclareMathSymbol{\obot}         {2}{matha}{"6B}
\newtheorem{theorem}{Theorem}[section]
\newtheorem{corollary}{Corollary}[theorem]
\newtheorem{proposition}[theorem]{Proposition}
\newtheorem{lemma}[theorem]{Lemma}
\newtheorem{remark}[theorem]{Remark}
\theoremstyle{definition}
\newtheorem{definition}[theorem]{Definition}
\numberwithin{equation}{section}
\newcommand{\bX}{\mathbb{X}}
\newcommand{\bx}{{\bf x}}
\newcommand{\bk}{{\bf k}}
\newcommand{\bV}{\mathbb{V}}
\newcommand{\bY}{\mathbb{Y}}
\newcommand{\by}{\mathbf{y}}
\newcommand{\bA}{\mathbb{A}}
\newcommand{\bL}{\mathbf{L}}
\newcommand{\R}{\mathbb{R}}
\newcommand{\Q}{\mathbb{Q}}
\newcommand{\Z}{\mathbb{Z}}
\newcommand{\C}{\mathbb{C}}
\renewcommand{\H}{\mathbb{H}}
\newcommand{\F}{\mathbb{F}}
\newcommand{\bG}{\mathbb{G}}
\newcommand{\bF}{\mathbf{F}}
\newcommand{\bfV}{\mathbf{V}}
\newcommand{\Oo}{\mathcal{O}}
\newcommand{\cV}{\mathcal{V}}
\newcommand{\cZ}{\mathcal{Z}}
\newcommand{\cN}{\mathcal{N}}
\newcommand{\cW}{\mathcal{W}}
\newcommand{\cM}{\mathcal{M}}
\newcommand{\cY}{\mathcal{Y}}
\newcommand{\cP}{\mathcal{P}}
\newcommand{\Spec}{\mathrm{Spec}\, }
\newcommand{\Spf}{\mathrm{Spf}\, }
\newcommand{\Herm}{\mathrm{Herm}}
\newcommand{\Hom}{\mathrm{Hom}}
\newcommand{\End}{\mathrm{End}}
\newcommand{\rU}{\mathrm{U}}
\newcommand{\Nilp}{\mathrm{Nilp}\, }
\newcommand{\Lie}{\mathrm{Lie}\, }
\begin{document}

\title[Special cycles on unitary Shimura Varieties at ramified primes]{Special cycles on the basic locus of unitary Shimura Varieties at ramified primes}

%    Information for first author
\author{Yousheng Shi}
%    Address of record for the research reported here
\address{Department of Mathematics, University of Wisconsin}

%\thanks{The first author was supported in part by NSF Grant \#000000.}

%    General info
%\subjclass[2000]{Primary 54C40, 14E20; Secondary 46E25, 20C20}

\date{\today}

%\keywords{Differential geometry, algebraic geometry}

\begin{abstract}
In this paper, we study special cycles on the basic locus of certain unitary Shimura varieties over the ramified primes and their local analogues on the corresponding Rapoport-Zink spaces. We study the support and compute the dimension of these cycles.
\end{abstract}

\maketitle

\setcounter{tocdepth}{1}
\tableofcontents

\section{Introduction}
In this paper, we study the basic locus of certain unitary Shimura varieties over ramified  primes and special cycles on it. To approach this global problem we first study local special cycles on the corresponding Rapoport-Zink spaces and then apply a uniformization theorem to convert our local results on the  Rapoport-Zink spaces to global ones. Our results will have applications to Kudla's program, in particular Kudla-Rapoport type of conjectures over these ramified primes (c.f. \cite{KR1}, \cite{KR2}, \cite{LZ}, \cite{liliu2021},\cite{HSY} and \cite{Shi2}).   

We specialize to an integral model of Shimura varieties associated to $\rU(1,n-1)$ which parametrize abelian schemes with certain CM action and a compatible principal polarization. This integral model and the corresponding model of Rapoport-Zink space is first proposed by Pappas \cite{P} (see also \cite{RSZshimura}). It is flat over the base, normal and Cohen-Macaulay and has isolated singularities. One can blow up these singularities to get a model which has semi-stable reduction and has a simple moduli interpretation, see \cite{Kr}. We focus on the Pappas model in this paper but all results can be easily adjusted to the Kr\"amer model case as these models are the same outside the singularities. 

In the Rapoport-Zink spaces setting, we study the reduced locus of special cycles and compute their dimensions. As an intermediate step, we prove an isomorphism between two Rapoport-Zink spaces of different nature. In the  Shimura variety setting, we write down the uniformization theorem of the basic locus over the ramified primes and then translate our local results to global ones. We now explain our results in more detail.

\subsection{Local results}\label{subsec:introlocal}
Let $p>2$ be a prime and fix a tower of finite extensions $\Q_p\subseteq H\subseteq F_0\subset F$ where $F/F_0$ is quadratic and ramified. For any $p$-adic field $R$, we denote by $\Oo_R$ its ring of integers.
Let $\breve F$ be the completion of the maximal unramified extension of $F$. Let $\Nilp \Oo_{\breve F}$  be the categories of $\Oo_{\breve F}$-schemes $S$ on which $p$ is locally nilpotent.
For $S\in \Nilp \Oo_{\breve F}$, let $\bar{S}=S\times_{\Spec \Oo_{\breve F}} \Spec \bar{\F}_p$.
The Rapoport-Zink space $\cN^{F/H}_{(r,s)}$ is the moduli space over $\Spf \Oo_{\breve F}$ whose $S$ points are objects $(X,\iota,\lambda,\rho)$ where $X$ is a supersingular formal $p$-divisible group over $S$, $\iota:\Oo_F\rightarrow \End(X)$ is an $\Oo_F$-action on $X$ whose restriction to $\Oo_H$ is strict, $\lambda:X\rightarrow X^\vee$ is a principal polarization, and $\rho:X\times_{S} \bar{S}\rightarrow \bX\times_{\Spec \bar{\F}_p} \bar{S}$ is a map to a framing object $(\bX,\iota_\bX,\lambda_\bX) $ over $\Spec \bar{\F}_p$. We require that the Rosati involution of $\lambda$ induces on $\Oo_F$ the Galois conjugation over $\Oo_{F_0}$ and the action $\iota$ satisfies the $(r,s)$ signature condition (Definition \ref{def:signatureconditions}). See Definition \ref{def:EHmodulewithsignature} and \ref{def:RZspace} for the detailed definition of $\cN^{F/H}_{(r,s)}$. We first show the following theorem (Theorem \ref{thm:comparisonRZspaces}). 
\begin{theorem}\label{thm:introA}
Suppose that $F_0/H$ is unramified. Then there is an isomorphism
\[\mathfrak{C}:\cN^{F/H}_{(r,s)}\cong \cN^{F/F_0}_{(r,s)}.\]
\end{theorem}
The significance of the above theorem is that  $\cN^{F/\Q_p}_{(r,s)}$ can be related to unitary Shimura varieties by the uniformization theorem while $\cN^{F/F_0}_{(r,s)}$ is easier to study. From now on we mainly focus on the signature $(1,n-1)$. By \cite{RTW} we know that $\cN^{F/F_0}_{(1,n-1)}$ is representable by a formal scheme over $\Spf \Oo_{\breve F}$. Moreover there is a stratification of its reduced locus given by
\[(\cN^{F/F_0}_{(1,n-1)})_{\mathrm{red}}=\biguplus_{\Lambda} \cN_{\Lambda}^o\]
where $\Lambda$ runs over the so-called vertex lattices, see Theorem \ref{thm:incidencerelation}.

We can define special cycles on both $\cN^{F/H}_{(1,n-1)}$ and $\cN^{F/F_0}_{(1,n-1)}$. The isomorphism in Theorem \ref{thm:introA} maps special cycles in the first space to special cycles in the second. Without loss of generality we focus on $\cN^{F/F_0}_{(1,n-1)}$. Let $(\bY,\iota_\bY,\lambda_\bY)$ (resp. $(\bX,\iota_\bX,\lambda_\bX)$) be the framing object of $\cN^{F/F_0}_{(0,1)}$ (resp. $\cN^{F/F_0}_{(1,n-1)}$). Define an $F$ vector space
\[\bV:=\Hom_{\Oo_F}(\bY,\bX)\otimes_{\Z} \Q\]
of rank $n$ with the Hermitian form $h(\cdot,\cdot)$ such that for any $x,y\in \bV$ we have
\[ h(x,y)=\lambda_\bY^{-1} \circ y^\vee \circ\lambda_\bX \circ x\in \mathrm{End}_{\Oo_F}(\bY)\otimes \Q \xrightarrow{\cong} F,\]
where $y^\vee$ is the dual of $y$.
For an $\Oo_F$-lattice $\bL\subset \bV$, the associated special cycle $\cZ(\bL)$ is the subfunctor of $\cN^{F/F_0}_{(0,1)} \times_{\Spf \Oo_{\breve F} } \cN^{F/F_0}_{(1,n-1)}$ such that $\xi=(Y,\iota,\lambda_{Y}, \varrho_{Y}, {X},\iota,\lambda_{ X},\varrho_{ X})\in \cZ(\bL)(S)$ if for any $\bx\in \bL$ the quasi-homomorphism 
    \[\varrho^{-1}_{X}\circ {\bx} \circ \varrho_{ Y}: Y\times_S \bar{S}\rightarrow X\times_S \bar{S}\]
lifts to a homomorphism from $Y$ to $X$.

Any $\Oo_F$-lattice $L$ with a Hermitian form $(\cdot,\cdot)$ has a Jordan splitting 
\begin{equation}\label{eq:IntroJordandecomp}
    L=\obot_{\lambda \in \Z} L_\lambda
\end{equation}
where $\obot$ stands for orthogonal direct sum and $L_\lambda$ is $\pi^{\lambda}$-modular (see Section \ref{subsec:Hermitianlattices}). We say $L$ is integral if $(x,y)\in \Oo_F$ for any $x,y\in L$.
For an integer $t$ and a fixed Jordan decomposition as above we define 
\[L_{\geq t}=\obot_{\lambda \geq t} L_\lambda\subset L.\]
The following summarizes Theorem \ref{thmA} and \ref{thmB} and their corollaries.
\begin{theorem}\label{thm:introB}
Let $\bL\subset \bV$ be an $\Oo_F$-lattice of rank $n$.
\begin{enumerate}[leftmargin=*, label=({\roman*})]
\item $\cZ(\bL)$ is non-empty if and only if $\bL$ is integral.
\item $\cZ(\bL)_{\mathrm{red}}$ (the reduced scheme of $\cZ(\bL)$) is a union of strata $\cN_{\Lambda}^o$ where $\Lambda$ ranges over a set of vertices which can be described in terms of $\bL$.
\item Fix a Jordan decomposition of $\bL$ as in \eqref{eq:IntroJordandecomp}.
Define 
    \[\mathfrak{d}(\bL):=\left\{\begin{array}{cc}
      \mathrm{rank}_{\Oo_F}(\bL_{\geq 1})-1   & \text{if \ $\mathrm{rank}_{\Oo_F}(\bL_{\geq 1})$ is odd} \\
      \mathrm{rank}_{\Oo_F}(\bL_{\geq 1})  & \text{if\  $\mathrm{rank}_{\Oo_F}(\bL_{\geq 1})$ is even and $ \bL_{\geq 1}\otimes_\Z \Q$ is split}\\
      \mathrm{rank}_{\Oo_F}(\bL_{\geq 1})-2  & \text{if\  $\mathrm{rank}_{\Oo_F}(\bL_{\geq 1})$ is even and $ \bL_{\geq 1}\otimes_\Z \Q$ is non-split},\\
    \end{array}\right.\]
Then $\cZ(\bL)_{\mathrm{red}}$ is purely of dimension $\frac{1}{2}\mathfrak{d}(\bL)$, i.e., every irreducible component of $\cZ(\bL)_{\mathrm{red}}$ is of dimension $\frac{1}{2}\mathfrak{d}(L)$. Here we say a Hermitian space $V$ of dimension $n$ is split if
\[(-1)^{n(n-1)/2} \det(V)\in \mathrm{Nm}_{F/F_0}(F^\times).\]
Otherwise we say it is nonsplit. 
\item 
Define 
\[n_{\mathrm{odd}}=\sum_{\lambda\geq 3,\, \lambda \text{ is odd}}\mathrm{rank}_{\Oo_F}(\bL_\lambda),\]
and 
\[n_{\mathrm{even}}=\sum_{\lambda\geq 2,\, \lambda \text{ is even}}\mathrm{rank}_{\Oo_F}(\bL_\lambda).\]
Then $\cZ(\bL)_{\mathrm{red}}$ is irreducible if and only if the following two conditions hold simultaneously
\begin{enumerate}
    \item $n_{\mathrm{odd}}=0$. \ 
    \item $n_{\mathrm{even}} \leq 1$ or $n_{\mathrm{even}}=2$ and $\bL_{\geq 2}\otimes_\Z \Q$ is non-split.
\end{enumerate}
\end{enumerate}
\end{theorem}

\subsection{Global results}\label{subsec:introglobal}
In the global setting, let $F$ be a CM field with totally real subfield $F_0$ and $\Phi\subset \Hom_\Q(F,\C)$ be a CM type of $F$. Denote by $x\mapsto \bar{x}$ the Galois conjugation of $F/F_0$ and fix a $\varphi_0\in\Phi$. Define
\begin{equation}\label{eq:cVram}
    \mathscr{V}_{\mathrm{ram}}=\{\text{finite places } v \text{ of }F_0\mid v\text{ ramifies in }F\}.
\end{equation}
We assume that $\mathscr{V}_{\mathrm{ram}}$ is nonempty and every $v\in \mathscr{V}_{\mathrm{ram}}$ is unramified over $\Q$ and does not divide $2$.

Let $V$ be a $n$ dimensional $F$-vector space with a Hermitian form $(\cdot,\cdot)$ which has signature $(n-1,1)$ with respect to $\varphi_0$ and $(n,0)$ with respect to any other $\varphi\in \Phi\backslash \{\varphi_0\}$. The CM type $\Phi$ together with the signature of $V$ determines a reflex field $E$ and $F$ embeds into $E$ via $\varphi_0$. Define groups
\[Z^\Q:=\{z\in\mathrm{Res}_{F/\Q} \bG_m\mid \mathrm{Nm}_{F/F_0}(z)\in \bG_m\},\ 
G=\mathrm{Res}_{F_0/\Q}\rU(V). \]
Also define 
\[\tilde{G}:=Z^\Q\times G.\]
We can define a corresponding Hodge map $h_{\tilde G}:\C^\times\rightarrow \tilde{G}(\R)$. By choosing a  compact subgroup $K=K_{Z^\Q}\times K_G\subset \tilde{G}(\bA_f)$ where $K_{Z^\Q}$ is the maximal compact subgroup of $Z^\Q(\bA_f)$ (see \eqref{eq:K_Z}) and $K_G$ is a compact subgroup of $G(\bA_{f})$, we get a Shimura variety $S(\tilde G, h_{\tilde{G}})_K$ which has a canonical model over $\Spec E$. Further more if we assume $K_G$ is the stablizer of a self-dual lattice (see \eqref{eq:K_G}), then
\cite{RSZshimura} defined a moduli functor $\cM$ of abelian varieties with $\Oo_F$-action and a compatible principal polarization over $\Spec \Oo_E$ whose complex fiber is $S(\tilde G, h_{\tilde{G}})_K$. We review the definition in Section \ref{sec:shimuravariety}. By our assumption $K$ is of the form $K=\prod_{v} K_v$ where we take the restricted product over all finite places of $F_0$.
Throughout the paper, we use the notations
\[K_p=\prod_{v\mid p} K_v,\ K^p=\prod_{v\nmid p} K_v,\]
and similar notations with $K$ replaced by $K_G$ or $K_{Z^\Q}$.

Now assume $v_0\in \mathscr{V}_{\mathrm{ram}}$ and let $w_0$ be the place of $F$ above it. Let $p$ be the characteristic of the residue field of $F_{0,v_0}$.
Fix a finite place $\nu$ of $E$ above $v_0$ with residue field $k_\nu$.
Let $\breve{E}_\nu$ be the completion of the maximal unramified extension of $E_\nu$. We denote by $\cM_{\nu}^{ss}$ the basic locus of $\cM$ at $\nu$ and denote by $\widehat{\cM}_{\nu}^{ss}$ the completion of $\cM\times_{\Spec \Oo_{E}}\Spec \Oo_{\breve{E}_\nu}$ along $\cM_{\nu}^{ss}\times_{\Spec k_\nu}\Spec \bar{k}_\nu$.  
Then we have the following uniformization theorem which is a consequence of \cite[Theorem 6.30]{RZ} and Theorem \ref{thm:introA}.
\begin{theorem}\label{thm:introC}
Assume $v_0\in \mathscr{V}_{\mathrm{ram}}$ and $\mathscr{V}_{\mathrm{ram}}$ satisfies the condition stated after \eqref{eq:cVram}. There is an isomorphism:
\[\Theta: \tilde{G}'(\Q)\backslash \cN'\times\tilde{G}(\bA_f^p)/K^p \cong \widehat{M}_\nu^{ss}.\]
where $\tilde{G}'$ is an inner form of $\tilde{G}$ and 
\[\cN'=Z^\Q(\Q_p)/K_{Z^\Q,p}\times (\cN^{F_{w_0}/F_{0,v_0}}_{(1,n-1)} \widehat{\times}_{\Spf\Oo_{\breve{F}_{w_0}}} \Spf \Oo_{\breve{E}_\nu})\times \prod_{v\neq v_0} \rU(V)(F_{0,v})/K_{G,v}\]
where the product in the last factor is over all places of $F_0$ over $p$ not equal to $v_0$.
\end{theorem}

For a non-degenerate totally positive definite $F/F_0$-Hermitian matrix $T$, we define the special cycle $\cZ(T)$ following the definition of \cite{KR2} in Definition \ref{def:global Z T}. Now assume $T$ has rank $n$. Let $V_T$ be the Hermitian $F$-space with Gram matrix $T$ and define 
\begin{equation}\label{eq:DiffTVintro}
    \mathrm{Diff}(T,V):=\{v\text{ is a finite place of }F_0\mid V_v \text{ is not isomorphic to } (V_T)_v\}.
\end{equation}
It is well-known to experts that $\cZ(T)$ is empty when $\mathrm{Diff}(T,V)$ contains more than one element and $\cZ(T)$ is supported on $\cM^{ss}_\nu$ over finite primes $\nu$ of $E$ above $v$ if $\mathrm{Diff}(T,V)=\{v\}$. We briefly review the proof of these results (see Proposition \ref{prop:precisesupport}).
Then the following theorem is a consequence of Theorem \ref{thm:introB} and Theorem \ref{thm:introC}.
\begin{theorem}\label{thm:introD}
Assume that $T$ is a totally positive definite $F/F_0$-Hermitian matrix with values in $\Oo_F$ such that $\mathrm{Diff}(T,V)=\{v_0\}$ where $v_0\in \mathscr{V}_{\mathrm{ram}}$ and $\mathscr{V}_{\mathrm{ram}}$ satisfies the condition stated after \eqref{eq:cVram}. Then $\cZ(T)$ is supported on $\cM^{ss}_\nu$ and $\cZ(T)_{\mathrm{red}}$ is equidimensional of dimension $\frac{1}{2}\mathfrak{d}(L_{v_0})$ where $L_{v_0}$ is any Hermitian lattice over $\Oo_{F,v_0}$ whose gram matrix is $T$ and $\mathfrak{d}(L_{v_0})$ is defined as in Theorem \ref{thm:introB}. 
\end{theorem}

The paper is organized as follows. In Section \ref{sec:RZspaces}, we prove Theorem \ref{thm:introA} (Theorem \ref{thm:comparisonRZspaces}) and recall some properties of $\cN^{F/F_0}_{(1,n-1)}$ as studied in \cite{RTW}. In Section \ref{sec:localspecialcycle}, we define our local version of special cycles on Rapoport-Zink spaces and prove Theorem \ref{thm:introB} (Theorem \ref{thmA} and \ref{thmB}). In Section \ref{sec:shimuravariety}, we recall the definition of the arithmetic model of the Shimura variety studied in \cite{RSZdiagonal}. In Section \ref{sec:globalspecialcycle} we define global special cycles $\cZ(T)$ and prove Theorem \ref{thm:introC} (Theorem \ref{thm:uniformization}) and \ref{thm:introD} (Theorem \ref{thm:mainglobal}).

\noindent
{\bf Acknowledgement.} The author would like to thank Michael Rapoport for suggesting the problem and reading through early versions of the paper. We would also like to thank Patrick Daniels, Qiao He and Tonghai Yang for helpful discussions.

\section{Relative and absolute Rapoport-Zink spaces}\label{sec:RZspaces}
We use the notations as in Section \ref{subsec:introlocal}.
In this section, we define the Rapoport-Zink space $\cN^{F/H}_{(r,s)}$ and recall its basic properties from \cite{RTW} when $H=F_0$ and $(r,s)=(1,n-1)$. The space $\cN^{F/F_0}_{(1,n-1)}$ is convenient for studying special cycles. 
On the other hand $\cN^{F/\Q_p}_{(r,s)}$ shows up naturally in the uniformization theorem (see Theorem \ref{thm:uniformization}) of the basic locus of certain unitary Shimura varieties (see Section \ref{sec:shimuravariety}). We call $\cN^{F/F_0}_{(r,s)}$ (resp. $\cN^{F/\Q_p}_{(r,s)}$) a relative (resp. absolute) Rapoport-Zink space following the terminology of \cite{mihatschrelative}. 

In Theorem \ref{thm:comparisonRZspaces}, we show that for different choices of $H$, $\cN^{F/H}_{(r,s)}$ are isomorphic to each other given that $F_0/H$ is unramified. We follow the approach of \cite[Section 2.8]{liliu2021}. Alternatively one can use the method of \cite{KRZuniformization}. The analogue of Theorem \ref{thm:comparisonRZspaces} when $F/F_0$ is unramified was proved in \cite{mihatschrelative}.

\subsection{The signature condition}
Assume $F_0/H$ is unramified with degree $f$. 
% By local class field theory, it is easy to see that $F$ is Galois over $H$.
We denote the Galois conjugation of $F/F_0 $ by $x\mapsto \bar{x}$. Fix a uniformizer $\pi$ of $F$ such that $\pi_0:=\pi^2\in F_0$ and is a uniformizer of $F_0$. Let $\bk$ be the residue field of $\Oo_{F_0}$ (hence also that of $\Oo_F$) with an algebraic closure $\bar\bk$.
Let $\breve{H}$ be the completion of a maximal unramified extension of $H$ (hence also that of $F_0$) in $\breve{F}$. Let $x\mapsto \sigma(x)$ denote the Frobenius of $\breve{H}/H$. Define
\[\Psi:=\Hom_H(F_0,\breve{H}).\]
Fix a distinguished element $\psi_0\in \Psi_0$. 
Define $\psi_i=\sigma^i\circ\psi_0$ for $i \in \Z/(f\Z)$.
Then
\[\Psi=\{\psi_i\mid i \in \Z/(f\Z)\}.\]
Also define
\[\Phi:=\Hom_H(F, \breve{F}).\]
Choose a partition of $\Phi=\Phi_+\sqcup \Phi_-$ such that  
\[\overline{\Phi}_+=\Phi_-.\]
For   $i \in \Z/(f\Z)$, let $\varphi_i$ be the element in $\Phi_+$ such that its restriction to $F_0$ is $\psi_i$.

We denote by  $\Oo_{\breve{H}}$ (resp. $\Oo_{\breve F}$) the ring of integers of $\breve{H}$ (resp. $\breve{F}$). There are decompositions by the Chinese remainder theorem:
\begin{equation}\label{eq:decompositionofOo}
    \Oo_{F_0}\otimes_{\Oo_H} \Oo_{\breve{H}}=\prod_{\psi \in \Psi} \Oo_{\breve H} , \quad \Oo_F\otimes_{\Oo_H} \Oo_{\breve{H}}=\prod_{\psi \in \Psi} \Oo_F\otimes_{\Oo_{F_0},\psi} \Oo_{\breve H}\cong\prod_{\psi \in \Psi} \Oo_{\breve F}.
\end{equation}
The Frobenius $1\otimes\sigma$ is homogeneous and acts simply transitively on the index set.

Let $S$ be an $\Oo_H$-scheme and $\mathfrak{L}$ be a locally free sheaf over $S$ with an $\Oo_H$-action. We say the action is strict if it agrees with the structure map $\Oo_H\rightarrow \Oo_S$.
A strict formal $\Oo_H$-module over $S$ is formal $p$-divisible group over $S$ with an $\Oo_H$-action $\iota:\Oo_H\rightarrow \End(X)$ such that its induced action on $\Lie X$ is strict. Denote by $X^\vee$ the dual of $X$ in the category of strict $\Oo_H$-module, see \cite[Section 11]{mihatschrelative}. We say $X$ is supersingular if its relative Dieudonn\'e module (see \cite[Appendix B.8]{fargues2006isomorphisme}) over $H$ at each geometric point of $S$  has slope $\frac{1}{2}$. 
\begin{definition}
For $S\in \Nilp \Oo_{\breve F}$, a hermitian $\Oo_F$-$\Oo_H$-module over $S$ is a triple $(X,\iota,\lambda)$ where $X$ is a strict formal $\Oo_H$-module together with an action $\iota:\Oo_F\rightarrow \End(X)$ extending the action of $\Oo_H$ and a principal polarization $X\rightarrow X^\vee$ such that 
\[\lambda^{-1}\circ\iota(a)^\vee \circ\lambda =\iota(\bar{a}), \ \forall a\in \Oo_F. \]
Two hermitian $\Oo_F$-$\Oo_H$-modules $(X,\iota,\lambda)$ and $(X',\iota',\lambda')$ are isomorphic (resp. quasi-isogenic) if there is an $\Oo_F$-linear isomorphism (resp. quasi-isogeny) $\varphi:X\rightarrow X'$ such that $\varphi^\vee\circ \lambda'\circ \varphi=\lambda$.
\end{definition}

Let $r,s\in \Z_{\geq 0}$ and set $n:=r+s$. Define the signature function $\Phi\rightarrow \Z_{\geq 0}$ by 
\[r_\varphi=\begin{cases}
r & \text{ if } \varphi=\varphi_0,\\
0 & \text{ if } \varphi\in\Phi_+\backslash\{\varphi_0\},\\
n-r_{\bar{\varphi}} & \text{ if } \varphi\in\Phi_-.
\end{cases}\]
\begin{definition}\label{def:signaturepolys}
For $a\in F$, we define the following polynomial
\begin{align*}
     P_{F/H,(r,s),\varphi_0,\Phi_+}(a;t):=& \prod_{\varphi\in \Phi} (t-\varphi(a))^{r_\varphi}.
\end{align*}
\end{definition}
Let $S\in\Nilp{\Oo_{\breve{F}}}$ and $(\mathfrak{L},\iota)$ a locally free sheaf over $S$ together with an $\Oo_F$ action $\iota$ whose restriction to $\Oo_H$ is strict. Decomposition \eqref{eq:decompositionofOo} induces decomposition of $\mathfrak{L}$:
\begin{equation}\label{eq:decompositionLieX}
    \mathfrak{L}=\bigoplus_{\psi\in\Psi} \mathfrak{L}_{\psi} . 
\end{equation}
\begin{definition}\label{def:signatureconditions}
We say $(\mathfrak{L},\iota)$ satisfies the signature condition $(F/H,(r,s),\varphi_0,\Phi_+)$ if the following conditions are satisfied.
\begin{enumerate}[leftmargin=*, label=({\roman*})]
    \item $\mathrm{charpol}(\iota(a)\mid \mathfrak{L})=P_{F/H,(r,s),\varphi_0,\Phi_+}(a;t)$ for all $a\in \Oo_F$.
    \item $(\iota(a)-a)\mid_{\mathfrak{L}_{\psi_0}}=0$ for all $a\in \Oo_{F_0}$.
    \item For each $\varphi\in \Phi_+$ such that $r_{\varphi}\neq r_{\bar{\varphi}}$, the wedge condition of \cite{P}:
    \[\wedge^{r_{\bar{\varphi}}+1} ((\iota(a)-\varphi(a))\mid \mathfrak{L}_{\psi} )=0, \quad \wedge^{r_{\varphi}+1} ((\iota(a)-\bar{\varphi}(a))\mid \mathfrak{L}_{\psi} )=0\]
     is satisfied for all $a\in \Oo_F$ where $\psi\in \Psi$ is the restriction of $\varphi$ to $F_0$.
\end{enumerate}
\end{definition}
\begin{remark}
When $r_\varphi=n$ or $0$, the condition (iii) above is the same as the banal condition of \cite[Definition 2.60]{liliu2021} or the Eisenstein condition in \cite[Section 2.2]{KRZuniformization}.
\end{remark}
\begin{definition}\label{def:EHmodulewithsignature}
Let $S\in \Nilp \Oo_{\breve{F}}$. Let 
\[\mathfrak{H}_S(F/H,(r,s),\varphi_0,\Phi_+)\]
be the category of supersingular hermitian $\Oo_F$-$\Oo_H$-modules $X$ over $S$ such that the induced $\Oo_F$-action on $\Lie X$ satisfies the signature condition $(F/H,(r,s),\varphi_0,\Phi_+)$.
\end{definition}

\subsection{Comparison theorem}
We will prove the following theorem.
\begin{theorem}\label{thm:comparison}
Assume that $F_0/H$ is unramified.
For $S\in \Nilp\Oo_{\breve F}$, there is an equivalence of categories
\[\mathcal{C}_S:\mathfrak{H}_S(F/H,(r,s),\varphi_0,\Phi_+)\rightarrow \mathfrak{H}_S(F/F_0,(r,s),\varphi_0,\{\varphi_0\})\]
that is compatible with base change.
\end{theorem}
If $S=\Spec R$, we often write $\mathfrak{H}_R$ (resp. $\mathcal{C}_R$) instead of $\mathfrak{H}_S$ (resp. $\mathcal{C}_S$). To prove Theorem \ref{thm:comparison}, we will use the theory of $f$-$\Oo$-displays developed by \cite{ACZ}. We recall some definitions and notations.
For an $\Oo_H$-algebra $R$, let $W_{\Oo_H}(R)$ be the relative Witt ring with respect to a fixed uniformizer of $H$ (see for example \cite[Definition 1.2.2]{FarguesFontaine}). Let $x\mapsto {}^\bF x$ be the Frobenius endomorphism and $x\mapsto {}^\bfV x$ be the Verschiebung. Let $I_{\Oo_H}(R)={}^\bfV W_{\Oo_H}(R)$ and we can define ${}^{\bfV^{-1}}$ on $I_{\Oo_H}(R)$. For $a\in R$, let $[a]\in W_{\Oo_H}(R)$ be its Teichm\"uller representative. 

Let $\hat{\psi}_i$ be the composition of $\psi_i$ with the Cartier morphism $\Oo_{\breve{H}}\rightarrow W_{\Oo_H}(\Oo_{\breve H})$. For $i\in \Z/(f\Z)$, let $\epsilon_i$ be the unique unit in $W_{\Oo_H}(\Oo_{F_0})$ such that ${}^\bfV \epsilon_i=[\psi_i(\pi_0)]-\hat{\psi}_i(\pi_0)$, which exists by \cite[Lemma 2.24]{ACZ}. Following \cite[(2.20)]{liliu2021}, we can define a unit $\mu_\pi\in W_{\Oo_H}(\Oo_{\breve{H}})$ such that
\begin{equation}\label{eq:mu pi}
    \frac{{}^{\bF^f}\mu_\pi}{\mu_\pi}=\prod_{i=1}^{f-1}{}^{\bF^{f-1-i}} \epsilon_i.
\end{equation}

\begin{definition}(\cite[Definition 2.1]{ACZ}). Assume $f\in \Z_{\geq 1}$. 
An $f$-$\Oo_H$-display over $R$ is a quadruple $\cP=(P,Q,\bF,\dot{\bF})$ consisting of the following data: a finitely generated projective $W_{\Oo_H}(R)$-module $P$, a submodule $Q\subset P$, and two ${}^{\bF^f}$-linear maps:
\[\bF:P\rightarrow P,\quad \dot{\bF}:Q\rightarrow P.\]
The following conditions are required. 
\begin{enumerate}[leftmargin=*, label=({\roman*})]
    \item $I_{\Oo_H}(R) P\subseteq Q$ and there is a decomposition of $W_{\Oo_H}(R)$-modules $P=L\oplus T$ such that $Q=L\oplus I_{\Oo_H}(R) T$. Such a decomposition is called a normal decomposition. 
    \item $\dot{\bF}$ is an ${}^{\bF^f}$-linear epimorphism.
    \item For all $x\in P$ and $w\in W_{\Oo_H}(R)$, we have 
    \[\dot{\bF}({}^\bfV wx)={}^{\bF^{f-1}}w \bF(x).\]
\end{enumerate}
We define the Lie algebra of $\cP$ to be $\Lie \cP:=P/Q$. If $f=1$, we simply call $\cP$ an $\Oo_H$-display.
\end{definition}
We refer to \cite[Definition 2.3]{ACZ} for the definition of a nilpotent display and \cite[Section 11]{mihatschrelative} for the notion of polarizations of displays (see also \cite[Section 3]{KRZuniformization}). 
The main result of \cite{ACZ} tells us that there are equivalences of categories:
\[\{\text{nilpotent } f \text{-} \Oo_H \text{-displays over} R\}\rightarrow \{\text{strict formal } \Oo_{F_0}\text{-modules over }R\} \]
where $f=[F_0:H]$, in particular
\[\{\text{nilpotent } \Oo_H \text{-displays over} R\}\rightarrow \{\text{strict formal } \Oo_H\text{-modules over }R\}. \]

\noindent
{\textit{Proof of Theorem} \ref{thm:comparison}}:
\newline
The proof is similar with that of \cite[Proposition 2.62]{liliu2021}.
Assume that $S=\Spec R\in\Nilp\Oo_{\breve F}$. We abuse notation and denote the composition of $\hat{\psi}_i$ with $W_{\Oo_H}(\Oo_{\breve F_0})\rightarrow W_{\Oo_H}(R)$ by $\hat{\psi}_i$ as well.
Then \eqref{eq:decompositionofOo} induces 
\begin{equation}\label{eq:decompositionofOoEW}
    \Oo_F\otimes_{\Oo_H} W_{\Oo_H} (R) =\prod_{\psi\in \Psi} \Oo_F\otimes_{\Oo_{F_0},\hat{\psi}_i} W_{\Oo_H} (R).
\end{equation}

Assume $(X,\iota,\lambda)\in \mathfrak{H}_S(F/H,(r,s),\varphi_0,\Phi_+)$ and $\cP=(P,Q,\bF,\dot{\bF})$ be its associated $\Oo_H$-display. Then $\cP$ has an $\Oo_F$ action (still denoted by $\iota$). Equation
\eqref{eq:decompositionofOoEW} induces the following decomposition
\begin{equation}\label{eq:decompositionofP}
    P=\bigoplus_{\psi \in \Psi} P_\psi,\quad Q=\bigoplus_{\psi \in \Psi} Q_\psi, \text{ with } Q_\psi=P_\psi\cap Q
\end{equation}
where $P_\psi$ has an $\Oo_F\otimes_{\Oo_{F_0},\hat{\psi}_i} W_{\Oo_H} (R)$ action. Then $\bF$ and $\dot{\bF}$ shift the grading on $P$ in the following way.
\[\bF:P_\psi\rightarrow P_{\sigma\circ\psi},\quad \dot{\bF}: Q_\psi\rightarrow P_{\sigma\circ\psi}.\]
As in \cite[Section 11.1]{mihatschrelative}, the principal polarization $\lambda$ is equivalent to a collection of perfect $W_{\Oo_H}(R)$-bilinear skew-symmetric pairings 
\[\{\langle\cdot,\cdot\rangle_\psi :P_\psi\times P_\psi\rightarrow W_{\Oo_H}(R)\mid \psi \in \Psi\}\]
such that $\langle \iota(a) x,y\rangle_\psi=\langle x,\iota(\bar{a}) y\rangle_\psi$ for all $a\in \Oo_F$, $x,y\in P_\psi$ and $ \langle \dot{\bF}x,\dot{\bF}y\rangle_{\sigma\circ\psi}={}^{\bfV^{-1}}\langle x,y\rangle_\psi$ for all $x,y\in Q_{\psi}$.

For $\psi\neq\psi_0$, by \cite[Lemma 2.60]{liliu2021}, the banal signature condition implies
\[Q_\psi=(\pi\otimes 1+1\otimes [\varphi(\pi)])P_\psi+I_{\Oo_H}(R) P_\psi.\]
where $\varphi$ is the element in $\Phi_+$ above $\psi$.
Hence for $\psi\neq\psi_0$, we can define
\[\bF':P_\psi\rightarrow P_{\sigma\circ\psi}:x\mapsto \dot{\bF}((\pi\otimes 1+1\otimes [\varphi(\pi)])x).\]
By loc.cit., $\bF'$ is an ${}^\bF$-linear isomorphism. Now define
\[P^{\mathrm{rel}}=P_{\psi_0}, \quad Q^{\mathrm{rel}}=Q_{\psi_0},\quad \bF^{\mathrm{rel}}=((\bF')^{f-1}\circ \bF)|_{P^{\mathrm{rel}}}, \quad \dot{\bF}^{\mathrm{rel}}=((\bF')^{f-1}\circ \dot{\bF})|_{Q^{\mathrm{rel}}}.\]
Then $\cP^{\mathrm{rel}}:=(P^{\mathrm{rel}},Q^{\mathrm{rel}},\bF^{\mathrm{rel}},\dot{\bF}^{\mathrm{rel}})$ is a $f$-$\Oo_H$-display over $R$. Define
\[\iota^{\mathrm{rel}}:\Oo_F\rightarrow \End(\cP^{\mathrm{rel}})\]
simply by restricting $\iota$ to $P_{\psi_0}$.
Then the signature condition $(F/H,(r,s),\varphi_0,\Phi_+)$ restricted on $P_{\psi_0}$ is exactly the same as as the signature condition $(F/F_0,(r,s),\varphi_0,\{\varphi_0\})$. 
Define 
\[\langle\cdot,\cdot\rangle^{\mathrm{rel}}:=\mu_\pi \langle\cdot,\cdot\rangle|_{P^{\mathrm{rel}}}\]
where $\mu_\pi$ is as in \eqref{eq:mu pi}.
Then $\langle\cdot,\cdot\rangle^{\mathrm{rel}}$ is a perfect $W_{\Oo_H}(R)$-bilinear skew-symmetric pairing such that 
$\langle \iota(a) x,y\rangle^{\mathrm{rel}}=\langle x,\iota(\bar{a}) y\rangle^{\mathrm{rel}}$ for all $a\in \Oo_F$, $x,y\in P^{\mathrm{rel}}$. By the calculation before \cite[Remark 2.61]{liliu2021}, we also have
\[\langle \dot{\bF}^{\mathrm{rel}} x,\dot{\bF}^{\mathrm{rel}} y\rangle^{\mathrm{rel}}={}^{\bF^{f-1}\bfV^{-1}}\langle x,y\rangle^{\mathrm{rel}},\quad \forall x,y \in Q^{\mathrm{rel}}.\]
The form $\langle\cdot,\cdot\rangle^{\mathrm{rel}}$ gives a principal polarization of $\cP^{\mathrm{rel}}$. The pair $(\cP^{\mathrm{rel}},\iota^{\mathrm{rel}})$ together with the polarization gives an object
\[(X,\iota,\lambda)^{\mathrm{rel}}\in \mathfrak{H}_S(F/F_0,(r,s),\varphi_0,\{\varphi_0\}).\]
This is defined to be $\mathcal{C}_S((X,\iota,\lambda))$. The functor $\mathcal{C}_S$ is obviously functorial in $S$. The fact that $\mathcal{C}_S$ is an equivalence of categories can be proved verbatim as that of \cite[Proposition 2.62]{liliu2021}. 
\qedsymbol

\subsection{Comparison of Rapoport-Zink spaces}\label{subsec:comparisonofRZspaces}
Fix a triple 
\[(\bX^{F/H}, \iota^{F/H}_\bX,\lambda^{F/H}_\bX)\in \mathfrak{H}_{\bar\bk}(F/H,(r,s),\varphi_0,\Phi_+).\]
We essentially only have one or two such choices up to isogeny according to $n$ being odd or even, see Remark \ref{rmk:twoframingobjects} below.
\begin{definition}\label{def:RZspace}
Let $\mathcal{N}_{(r,s)}^{F/H}$ be the functor which associates to $S\in\Nilp\Oo_{\breve F}$ the set of isomorphism classes of quadruples $(X,\iota,\lambda,\varrho)$ where
\begin{enumerate}[leftmargin=*, label=({\roman*})]
   \item $(X,\iota,\lambda)\in  \mathfrak{H}_{S}(F/H,(r,s),\varphi_0,\Phi_+)$,
   \item $\varrho: X\times_S \bar{S} \rightarrow \bX^{F/H}\times_{\Spec \bar\bk}\bar{S} $ 
   is a $\Oo_F$-linear quasi-isogeny of height $0$ such that $\lambda$ and $\varrho^* (\lambda^{F/H}_\bX)$ differ locally on $\bar{S}$ by a factor in $\Oo_{H}^{\times}$. 
\end{enumerate}
An isomorphism between two such quardruples $(X,\iota,\lambda,\varrho)$ and $(X',\iota',\lambda',\varrho')$ is given by an $\Oo_F$-linear isomorphism $\alpha:X\rightarrow X'$ such that $\varrho'\circ(\alpha\times_S \bar{S})=\varrho $ and $\alpha^*(\lambda')$ is an $\Oo_{H}^\times$ multiple of $\lambda$.
\end{definition}
\begin{remark}
In the definition of $\mathcal{N}_{(r,s)}^{F/H}$, we can replace condition (ii) by the condition that $\varrho$ is a $\Oo_F$-linear quasi-isogeny of height $0$ such that $\lambda=\varrho^* (\lambda_\bX)$. The resulting functor is isomorphic to the original one as $(X,\iota,\lambda,\varrho)$ and $(X,\iota,a\lambda,\varrho)$ are isomorphic in $\mathcal{N}_{(r,s)}^{F/F_0}$ for $a\in \Oo_{H}^\times$.
\end{remark}
By \cite[Chapter 3]{RZ},  $\mathcal{N}_{(r,s)}^{F/H}$ is representable by a formal scheme locally of finite type over $\Spf \Oo_{\breve F}$. 
\begin{theorem}\label{thm:comparisonRZspaces}
Assume $F_0/H$ is unramified and the framing object $(\bX^{F/F_0}, \iota^{F/F_0}_\bX,\lambda^{F/F_0}_\bX)$ used in the definition of $\cN^{F/F_0}_{(r,s)}$ is isomorphic to $\mathcal{C}_{\bar\bk}((\bX^{F/H}, \iota^{F/H}_\bX,\lambda^{F/H}_\bX))$. Then there is an isomorphism 
\[\mathfrak{C}:\mathcal{N}_{(r,s)}^{F/H} \cong \mathcal{N}_{(r,s)}^{F/F_0}.\]
\end{theorem}
\begin{proof}
This is a consequence of Theorem \ref{thm:comparison}.
\end{proof}

\subsection{The relative Rapoport-Zink space}\label{subsec:relativeRZ}
In this subsection we assume $H=F_0$. We simply denote $\mathcal{N}_{(r,s)}^{F/F_0}$ by $\mathcal{N}_{(r,s)}$ and $\mathfrak{H}_S(F/F_0,(r,s),\varphi_0,\{\varphi_0\})$ by $\mathfrak{H}_S(r,s)$.
We recall some background information on $\mathcal{N}_{(1,n-1)}$ from \cite{RTW}. Although \cite{RTW} works on the category of $p$-divisible groups, their arguments and results easily extend to the category of strict formal $\Oo_{F_0}$-modules using relative Dieudonn\'e theory. 
\begin{proposition}(\cite[Proposition 2.1]{RTW})
The functor $\cN_{(1,n-1)}$ is representable by a separated formal scheme $\cN_{(1,n-1)}$, locally of finite type and flat over $\Spf \Oo_{\breve{F}}$. It is formally smooth over $\Spf \Oo_{\breve{F}}$ in all points of the special fiber except the superspecial points. Here a point $z\in\cN_{(1,n-1)}(\bk)$ is superspecial if $\Lie(\iota(\pi))=0$ where $(X,\iota,\lambda,\varrho )$ is the pullback of the universal object of $\cN_{(1,n-1)}$ to $z$. The superspecial points form an isolated set of points. 
\end{proposition}
For the signature $(0,1)$ we know that $\cN_{(0,1)}\cong \Spf \Oo_{\breve{F}}$ and has a universal formal $\Oo_F$-module $\cY$ (the canonical lifting of $\bY$ in the sense of \cite{G}) over it. 
\begin{remark}\label{rmk:cNnotation}
The formal scheme $\cN_{(1,n-1)}$ is denoted as $\cN^0$ in \cite{RTW}. In the rest of this section and Section \ref{sec:localspecialcycle} we often simply write $\cN$ for $\cN_{(1,n-1)}$ if the context is clear. 
\end{remark}

Let $F^u$ be the unique unramified quadratic extension of $F_0$ in $\breve{F}_0$ where $\breve{F}_0$ is the completion of the maximal unramified extension of $F_0$ in $\breve{F}$. Let $\sigma\in \mathrm{Gal}(\breve{F}_0/F_0)$ be the Frobenius element.
For a formal $\Oo_{F_0}$-module, we denote by $M(X)$ the relative Dieudonn\'e module of $X$. When $X$ has $F_0$-height $n$ and dimension $n$ over $\bar\bk$, $M(X)$ is a free $\Oo_{\breve{F}_0}$-module of rank $2n$ with a $\sigma$-linear operator $\bF$ and a $\sigma^{-1}$-linear operator $\bfV$ such that $\bfV \bF=\bF\bfV=\pi_0$. Denote by $\mathbb{E}=\breve{F}_0[\bF,\bfV]$ the rational Cartier ring.

Fix a framing object $(\bX,\iota_\bX,\lambda_\bX)\in \mathfrak{H}_{\bar\bk}(1,n-1)$. Let $N:=M(\bX)\otimes_\Z \Q$ be the rational relative Dieudonn\'e module of $\bX$. Then $N$ has a skew-symmetric $\breve{F}_0$-bilinear form $\langle\cdot,\cdot\rangle$ induced by $\lambda_\bX$ such that for any $x,y\in N$ we have 
\begin{align*}
    \langle\bF x,y\rangle=& \langle x,\bfV y\rangle^\sigma, \\
    \langle\iota(a)x,y\rangle=&\langle x,\iota(\bar{a})y\rangle, a \in F.
\end{align*}
We simply denote by $\pi$ the induced action of $\iota_{\bX}(\pi)$ on $N$. 
Define a $\sigma$-linear operator 
\begin{equation}\label{eq:tau}
    \tau=\pi \bfV^{-1}=\pi^{-1} F
\end{equation}
on $N$. Set $C=N^\tau$ (the set of $\tau$-fixed points in $N$), then we obtain a $n$-dimensional $F$-vector space with an isomorphism
\[C\otimes_{F} \breve{F} \simeq N.\]
For $x,y \in C$, we have
\begin{align*}
    \langle x,y\rangle =& \langle \tau(x),\tau(y)\rangle\\
    =&\langle \pi^{-1} \bF x, \pi \bfV^{-1} y\rangle\\
    =&-\langle \bF x, \bfV^{-1} y \rangle \\
    =&- \langle x,y\rangle^\sigma
\end{align*}
Choose $\delta\in F^u \backslash F_0$ such that $\delta^2\in \Oo_{F_0}^\times$. Define a form $(\cdot,\cdot)$ on $N$ by
\begin{equation}\label{eq:definitionof(,)}
    (x,y)=\delta(\langle\pi x,y\rangle+\pi\langle x,y\rangle)
\end{equation}
for all $x,y\in C$.
Then $(\cdot,\cdot)$ is Hermitian with values in $F$ when restricted on $C$ and
\begin{equation}\label{eq:recover <,>}
   \langle x,y\rangle=\frac{1}{2\delta} \mathrm{tr}_{F/F_0}(\pi^{-1}(x,y)), \forall x,y \in C. 
\end{equation}

\begin{remark}\label{rmk:latticeofY}
There is a unique object $(\bY,\iota_\bY,\lambda_\bY)\in \mathfrak{H}_{\bar\bk}(0,1)$ up to isomorphism.
We want to describe $M(\bY)$ explicitly. As an $\Oo_{F_0}$-lattice, it is of rank 2. We can choose a basis $\{e_1,e_2\}$ such that $\bF e_1=e_2,\bF e_2=\pi_0 e_1,\bfV e_1=e_2,\bfV e_2=\pi_0 e_1$ and $\langle e_1,e_2\rangle =\delta$. With respect to this basis,
$\mathrm{End}^0(\bY)=\mathrm{End}_{\mathbb{E}}(N)$ is of the form
\[\left\{\left(\begin{array}{cc}
    a & b \pi_0  \\
    b^\sigma & a^\sigma
\end{array}\right)\mid a,b \in F^u\right\},\]
which is the quaternion algebra $\mathbb{H}$ over $F_0$. By changing basis using elements in $\mathbb{H}\cap \mathrm{SL}_2(F_0)$ we can assume $F,V$ are of the same matrix form as before and 
\[\pi=\left(\begin{array}{cc}
    0 & \pi_0  \\
    1 & 0
\end{array}\right).\]
Thus $\tau$ is the diagonal matrix $\mathrm{diag}\{1,1\}$ and fixes the $F_0$-vector space $\mathrm{span}_{F_0}\{ e_1,e_2\}$. We have $(e_1,e_1)=-\delta^2$.
As $\Oo_F$ is a DVR and $N^\tau$ is a one dimensional $F$-space, $\mathrm{span}_{\Oo_F}\{e_1\}$ is the unique self-dual $\Oo_F$-lattice w.r.t. $(\cdot,\cdot)$. Let $\varrho_{\bY}$ be the identity of $\bY$, then $(\bY,\iota_\bY, \lambda_\bY,\varrho_\bY)$ is the unique closed point of $\cN_{(0,1)}(\bar\bk)$.
\end{remark}
\begin{remark}\label{rmk:twoframingobjects} 
By \cite[Remark 4.2]{RTW}, when $n$ is odd (resp. even) there is a unique (resp. exactly two) object $(\bX,\iota_\bX,\lambda_{\bX})\in \mathfrak{H}_{\bar\bk}(1,n-1)$ up to isogenies that preserves the $\lambda_\bX$ by a factor in $\Oo_{F_0}^\times$. These are the framing objects in the definition of $\cN_{(1,n-1)}$. This matches the number of similarity classes of Hermitian forms over local fields. 

When $n$ is odd, we simply take $(\bX,\iota_\bX,\lambda_\bX):=(\bY,\iota_\bY,\lambda_\bY)^n$ where $(\bY,\iota_\bY,\lambda_\bY)$ is defined in the previous remark.
When $n$ is even, we again define $\bX:=\bY^n$ with the diagonal action $\iota_{\bX}$ by $\Oo_F$. There are two choices of polarizations.
The first one $\lambda_{\bX}^+\in \End^0(\bX)\cong M_{n}(\H)$ is given by the anti-diagonal matrix with $1$'s on the anti-diagonal. The second one $\lambda_{\bX}^-$ is defined by the diagonal matrix $\mathrm{diag}(1,\ldots,1,u_1,u_2)$ where $u_1,u_2\in \Oo_{F_0}^\times$ and $-u_1 u_2\notin \mathrm{Nm}_{F/F_0}(F^\times)$. 
\end{remark}

For two $\Oo_F$-lattices $\Lambda$, $\Lambda'$ of $C$, we use the notation $\Lambda {\subset}^\ell \Lambda'$ to stand for the situation when $\pi \Lambda'\subseteq \Lambda \subseteq \Lambda'$ and $ \mathrm{dim}_{\bk}(\Lambda'/ \Lambda)=\ell$. Define
\begin{equation}\label{eq:LambdasharpandLambdavee}
    \Lambda^\sharp:=\{x\in C \mid (x,\Lambda)\subseteq \Oo_F\}, \quad \Lambda^\vee:=\{x\in C\mid \delta \langle x,\Lambda\rangle\subseteq \Oo_{F_0}\}.
\end{equation}
Similarly for an $\Oo_{\breve F}$-lattice $M\subset N$, define
\begin{equation}\label{eq:MsharpandMvee}
    M^\sharp:=\{x\in N \mid (x,M)\subseteq \Oo_{\breve{F}}\}, \quad M^\vee:=\{x\in N\mid \langle x,M\rangle\subseteq \Oo_{\breve{F}_0}\}.
\end{equation}
Then by \eqref{eq:definitionof(,)} and \eqref{eq:recover <,>}, $\Lambda^\sharp=\Lambda^\vee$. Similarly $M^\sharp=M^\vee$.
\begin{proposition}\label{cV}(\cite[Proposition 2.4]{RTW})
Define the following set of $\Oo_{\breve F}$-lattices 
\[\mathcal{V}:=\{ M\subseteq N \mid M^\sharp=M,\\
\pi \tau (M)\subseteq M \subseteq \pi^{-1} \tau(M),M\subset^{\leq 1} (M+\tau(M))\},\]
Then the map 
\[(X,\iota,\lambda,\varrho)\mapsto \varrho(M(X))\subset N\]
defines a bijection from $\cN(\bar\bk)$ to $\cV$.
\end{proposition}

A vertex lattice in $C$ is an $\Oo_F$-lattice $\Lambda\subset C$ such that $\pi \Lambda \subseteq \Lambda^\sharp \subseteq \Lambda$. We denote the dimension of the $\bk$-vector space $\Lambda/\Lambda^\sharp$ by $t(\Lambda)$, and call it the type of $\Lambda$. It is an even integer (see \cite[Lemma 3.2]{RTW}). 
\begin{lemma}(\cite[Proposition 4.1]{RTW})\label{lem:Zinklemma}
$\forall M\in \cV$, there is a unique minimal vertex lattice $\Lambda(M)$ such that $M\subseteq \Lambda(M)\otimes_{\Oo_F} \Oo_{\breve{F}}$.
\end{lemma}
Define
\[\cV(\Lambda):=\{M\in \cV\mid M\subseteq \Lambda\otimes_{\Oo_F} \Oo_{\breve{F}}\},\]
and 
\[\cV^o(\Lambda):=\{M\in \cV\mid \Lambda(M)=\Lambda\}.\]
Then apparently $ \cV^o(\Lambda)\subseteq \cV(\Lambda)$. The following theorem summarizes what we need from \cite[Section 6]{RTW}, in particular Theorem 6.10 of loc. cit..
\begin{theorem}\label{thm:incidencerelation}
We have the following facts.
\begin{enumerate}[leftmargin=*, label=({\roman*})]
\item For two vertex lattices $\Lambda_1$ and $\Lambda_2$
\[\cV(\Lambda_1)\subseteq \cV(\Lambda_2) \Leftrightarrow \Lambda_1 \subseteq \Lambda_2. \]
If $\Lambda_1\cap \Lambda_2$ is a vertex lattice, then 
\[\cV(\Lambda_1 \cap \Lambda_2)=\cV(\Lambda_1)\cap \cV(\Lambda_2),\]
otherwise $\cV(\Lambda_1)\cap \cV(\Lambda_2)=\emptyset $. 
\item For each vertex lattice $\Lambda$, there exist a reduced projective variety $\cN^o_{\Lambda}$ over $\Spec \bar\bk$ such that 
\[\cN^o_\Lambda(\bar\bk)=\cV^o(\Lambda).\]
The closure of any $\cN_{\Lambda}^o$ in $\cN_{\mathrm{red}}$ is given by 
\[\cN_{\Lambda}:=\biguplus_{\Lambda'\subseteq \Lambda} {\cN}_{\Lambda'}^o,\]
where the union is taken over vertex lattices $\Lambda'$ included in $\Lambda$.
$\cN_{\Lambda}$ is a projective variety of dimension $t(\Lambda)/2$. Its set of $\bar\bk$ points is $\cV(\Lambda)$. The inclusion of points $\cV(\Lambda_1)\subseteq \cV(\Lambda_2)$ in (i) is induced by a closed embedding $\cN_{\Lambda_1} \rightarrow \cN_{\Lambda_2}$.
\item There is a stratification of the reduced locus of $\cN$ given by
\[\cN_{\mathrm{red}}=\biguplus_{\Lambda} {\cN}_{\Lambda}^o\]
where the union is over all vertex lattices. $\cN(\bar \bk)$ is nonempty for all $n\geq 1$.
\end{enumerate}
\end{theorem}

\section{Special cycles on Rapoport-Zink spaces }\label{sec:localspecialcycle}
In this section, we define special cycles on $\cN^{F/H}_{(1,n-1)}$. We then state our main results on the support of these cycles. First we need some background information on Hermitian lattices.
\subsection{Hermitian lattices and Jordan splitting}\label{subsec:Hermitianlattices}
We use $\obot$ to denote direct sum of mutually orthogonal spaces. In particular, we use 
\[(\alpha_1)\obot \cdots \obot (\alpha_n)\]
to denote the $n$-dimensional $F$ vector space (or $\Oo_F$-lattice depending on the context) with a Hermitian form given by a diagonal matrix $\mathrm{diag}\{\alpha_1,\ldots,\alpha_n\}$ with respect to an orthogonal basis. We also use $H(i)$ to denote the hyperbolic plane which is the lattice of rank $2$ with Hermitian form given by the matrix
\[\left(\begin{array}{cc}
    0 & \pi^i  \\
    (-\pi)^i & 0
\end{array}\right)\]
with respect to a certain basis.

For a Hermitian lattice $L$ with Hermitian form $(\cdot,\cdot)$, define $sL$ to be  $\mathrm{min}\{\mathrm{val}_{\pi}(x, y)\mid x,y\in L\}$ where $\mathrm{val}_{\pi}$ is normalized such that $\mathrm{val}_{\pi}(\pi)=1$. We say $x\in L$ is maximal if $x$ is not in $\pi L$. We say $L$ is $\pi^i$-modular if $(x, L)=\pi^i \Oo_F$ for every maximal vector $x$ in $L$.

Any Hermitian lattice $L$ has a Jordan splitting 
\begin{equation}\label{Jordansplitting}
    L=\obot_{\lambda \in \Z\cup \{\infty\}} L_\lambda
\end{equation}
where $L_\lambda$ is $\pi^\lambda$-modular and $L_\infty$ is defined to be the radical of $L$. Any two Jordan splitting of $L$ have the same invariants, see \cite[Page 449]{J}. 

\begin{proposition}(\cite[Proposition 8.1]{J})\label{prop:modularlattice}
Let $L$ be a $\pi^i$-modular lattice of rank $n$. Then 
\begin{enumerate}
    \item $L\simeq (\pi_0^\frac{i}{2})\obot (\pi_0^\frac{i}{2}) \obot \cdots \obot (\pi_0^{-\frac{(n-1)i}{2}} \mathrm{det}(L)) $ if $i$ is even.\\
    \item $L\simeq H(i)\obot H(i)\obot \cdots \obot H(i)$ if $i$ is odd.
\end{enumerate}
In particular, when $i$ is odd $L$ must have even rank. 
\end{proposition}
For a sub $\Oo_F$-module $L$ in a Hermitian $F$-vector space $V$, define
\begin{equation}\label{eq:L V sharp}
    L^\sharp_V:=\{x\in V\mid (x,y)\in \Oo_F , \forall y \in L\}.
\end{equation}
When $V=L\otimes_\Z \Q$, we simply denote $L_V^\sharp$ by $L^\sharp$.
We will use the following basic lemmas throughout the paper, sometimes without explicitly referring to them.
\begin{lemma}\label{lem:L dual}
Assume $L$ and $L'$ are  $\Oo_F$-submodules inside a Hermitian $F$-vector space $V$. Then
\begin{enumerate}
    \item $(L+L')_V^\sharp=L_V^\sharp \cap (L')_V^\sharp$;
    \item $(L_V^\sharp)_V^\sharp=L$.
\end{enumerate}
\end{lemma}
\begin{proof}
(1) follows from the definition of $L_V^\sharp$. (2) can be proved by using the Jordan splitting of $L$.
\end{proof}
\begin{lemma}\label{directsummand}
Assume $L'$ is a sub $\Oo_F$-module of $L$ such that $L'$ is $\pi^s$-modular with $s=sL$. Then $L=L'\obot (L')^\bot$ where $(L')^\bot$ is the perpendicular complement of $L'$ in $L$. 
\end{lemma}
\begin{proof}
This is a direct consequence of \cite[Proposition 4.2]{J}.
\end{proof}

\subsection{Special cycles}\label{subsec:localspecialcycle}
For a moment, we go back to the setting of Section \ref{subsec:comparisonofRZspaces}.
Let $(\bX^{F/H},\iota^{F/H}_\bX,\lambda^{F/H}_\bX)$ (resp. $(\bY^{F/H},\iota^{F/H}_\bY,\lambda^{F/H}_\bY)$) be the framing object of $\cN^{F/H}_{(1,n-1)}$ (resp. $\cN^{F/H}_{(0,1)}$).
Define the space of special homomorphisms to be the $F$-vector space
\begin{equation}\label{eq:bV}
    \bV^{F/H}:=\mathrm{Hom}_{\Oo_F}(\bY^{F/H},\bX^{F/H})\otimes_\Z \Q
\end{equation}
Define a Hermitian form $h^{F/H}(\cdot,\cdot)$ on $\bV^{F/H}$ such that for any $x,y\in \bV^{F/H}$ we have 
\begin{equation}\label{h(,)}
    h^{F/H}(x,y)=(\lambda^{F/H}_\bY)^{-1} \circ y^\vee \circ\lambda^{F/H}_\bX \circ x\in \mathrm{End}_{\Oo_F}(\bY^{F/H})\otimes \Q \xrightarrow[\sim]{(\iota_\bY^{F/H})^{-1}} F
\end{equation}
as in \cite[Equation (3.1)]{KR1} where $y^\vee$ is the dual quasi-isogeny of $y$.

\begin{definition}\label{def:localspecialcycle}
For an $\Oo_F$-lattice $\bL\subset \bV^{F/H}$, the special cycle $\cZ(\bL)$ is the subfunctor of $\cN^{F/H}_{(0,1)}\times_{\Spf \Oo_{\breve F}} \cN^{F/H}_{(1,n-1)}$ whose $S$-points is the set of isomorphism classes of tuples
\[\xi=(Y,\iota,\lambda_{Y}, \varrho_{Y}, {X},\iota,\lambda_{ X},\varrho_{X})\in \cN^{F/H}_{(0,1)}\times_{\Spf \Oo_{\breve F}} \cN^{F/H}_{(1,n-1)}(S)\]
such that for any $\bx\in \bL$ the quasi-homomorphism 
    \[\varrho^{-1}_{X}\circ \bx\circ \varrho_{ Y}: Y\times_S \bar{S}\rightarrow X\times_S \bar{S}\]
deforms to a homomorphism from $Y$ to $X$. If $\bL$ is spanned by $\bx\in \bV^m$, we also denote $\cZ(\bL)$ by $\cZ(\bx)$.
\end{definition}
By Grothendieck-Messing theory, $\cZ(\bL)$ is a closed sub formal scheme in $\cN^{F/H}_{(0,1)}\times_{\Spf \Oo_{\breve F}} \cN^{F/H}_{(1,n-1)}$. 
\begin{proposition}\label{prop:comparisonofspecialcycles}
Keep the same assumption as Theorem \ref{thm:comparisonRZspaces}.
The functor $\mathcal{C}_{\bar\bk}$ in Theorem \ref{thm:comparison} induces an isomorphism (denoted by the same notation) $\mathcal{C}_{\bar\bk}:\bV^{F/H}\rightarrow \bV^{F/F_0}$ of Hermitiam vector spaces over $F$. Moreover for lattice $\bL\in \bV^{F/H}$, the functor $\mathfrak{C}$ in Theorem \ref{thm:comparisonRZspaces} induces an isomorphism of formal schemes:
\[\cZ(\bL)\rightarrow\cZ(\mathcal{C}_{\bar\bk}(\bL)).\]
\end{proposition}
\begin{proof}
This follows directly from Theorem \ref{thm:comparison}. One can compare our result with \cite[Remark 4.4]{mihatschrelative}.
\end{proof}

\subsection{Special cycles on Relative Rapoport-Zink spaces}
By Proposition \ref{prop:comparisonofspecialcycles}, we can without loss of generality assume that $H=F_0$. In this case we drop the superscript ${}^{F/F_0}$ over $\bX$, $\bY$, $\bV$ and $h$ etc.
For $x,y\in \bV$ we abuse notation and denote the induced map between the corresponding relative Dieudonn\'e modules still by $x,y$. As in Section \ref{subsec:relativeRZ}, we denote $\cN_{(1,n-1)}$ simply by $\cN$.
\begin{lemma}\label{lem:equivalenceofHermitianforms}
We have
\[h(x,y)(e_1,e_1)_\bY=(x(e_1),y(e_1))_\bX\]
where $e_1$ is as in Remark \ref{rmk:latticeofY} and $(\cdot,\cdot)_\bX,(\cdot,\cdot)_\bY$ are defined as in equation \eqref{eq:definitionof(,)} for the rational relative Dieudonn\'e module of $\bX$ and $\bY$ respectively.
\end{lemma}
\begin{proof}
We claim that $\lambda_\bY^{-1}\circ y^\vee\circ \lambda_\bX$ agrees with $y^*$ which is the adjoint operator of $y$ on $\mathrm{Hom}_{\mathbb{E}}(M(\bY)\otimes \Q,M(\bX)\otimes \Q)$ w.r.t. $\langle\cdot,\cdot\rangle_\bX$ and $\langle\cdot,\cdot\rangle_\bY$. In fact $\langle\cdot,\cdot\rangle_\bX$ is defined by $e\langle\cdot,\lambda_\bX \circ\cdot\rangle_\bX$ where $e\langle\cdot,\cdot\rangle_\bX$ is the pairing between $M(\bX)\otimes \Q$ and $M(\bX^\vee)\otimes \Q$, similarly for $\langle\cdot,\cdot\rangle_\bY$. Hence 
\begin{align*}
    \langle y(n),m\rangle _\bX=e\langle y(n),\lambda_\bX (m)\rangle_\bX=e\langle n, y^\vee\lambda_\bX(m)\rangle_\bY=\langle n, \lambda_\bY^{-1}y^\vee\lambda_\bX(m)\rangle _\bY,
\end{align*}
for all $n\in M(\bY)\otimes \Q$ and $m\in M(\bX)\otimes \Q$. This proves the claim. Hence
\begin{align*}
    (x(e_1),y(e_1))_\bX=& \langle\pi x(e_1),y(e_1)\rangle _\bX+\pi \langle x(e_1),y(e_1)\rangle _\bX \\
    =& \langle y^* \pi x(e_1),e_1\rangle _\bY+\pi \langle y^*  x(e_1),e_1\rangle _\bY  \\
    =& \langle \pi y^*  x(e_1),e_1\rangle _\bY+\pi \langle y^*  x(e_1),e_1\rangle _\bY  \\
    =& \langle \pi h(x,y) e_1,e_1\rangle _\bY+  \pi \langle h(x,y) e_1,e_1\rangle _\bY  \\
    =& h(x,y)(e_1,e_1)_\bY.
\end{align*}
This proves the lemma.
\end{proof}

Now assume $\bL\subset \bV$ is an $\Oo_F$-lattice and define
\begin{equation}\label{eq:L}
   L=\{\bx(e_1)\mid \bx\in \bL\}. 
\end{equation}
Then $L$ is an $\Oo_F$-lattice in $C$ with the same rank as $\bL$ and is similar to $\bL$ as a Hermitian lattice by Lemma \ref{lem:equivalenceofHermitianforms}.

\begin{definition}
Define $\mathrm{Vert}(L)$ to be the set of vertex lattices $\Lambda$ such that $L\subseteq \Lambda^\sharp$. We also define
\begin{equation}\label{eq:cWL}
  \cW(L):=\{M\in \cV\mid L \subseteq M\}\subset \cV=\cN(\bar\bk).
\end{equation}
\end{definition}
\begin{proposition}\label{prop:startinglemma}
For an $\Oo_F$-lattice $\bL\subset \bV$ , define $L$ as in equation \eqref{eq:L}.
The set of $\bar\bk$ points of the special cycle $\cZ(\bL)$ is $\cW(L)$. Moreover we have
\begin{equation}\label{eq:Z L union N Lambda}
    \cZ(\bL)_{\mathrm{red}}=\bigcup_{\Lambda\in \mathrm{Vert}(L)} \cN_\Lambda.
\end{equation}
\end{proposition}
\begin{proof}
Assume that $(X,\iota,\lambda,\varrho)$ is a point in $\cN(\bar\bk)$ and $M:=\varrho(M(X))\in \cV$ as in Proposition \ref{cV}. 
By Dieudonn\'e theory, for any $\bx\in \bL$, $\varrho^{-1}\circ\bx$ is a homomorphism from $\bY$ to $X$ if and only if $\varrho^{-1}\circ\bx(M(\bY))\subseteq M(X)$, if and only if $\bx(M(\bY))\subseteq M$. We know that $M(\bY)=\mathrm{span}_{\Oo_{\breve{F}}}\{e_1\}$. Hence the set of $\bar\bk$ points of the special cycle $\cZ(\bL)$ is $\cW(L)$. 

To prove \eqref{eq:Z L union N Lambda}, since both sides of the equation are reduced, it suffices to check it on the $\bar\bk$-points, namely,
\[\cW(L)=\bigcup_{\Lambda\in \mathrm{Vert}(L)} \cV(\Lambda).\]
Let $M\in \cV$ and suppose $\Lambda=\Lambda(M)$ as in Lemma \ref{lem:Zinklemma}. Then
\begin{align*}
    L\subseteq M \Leftrightarrow & M\subseteq (L_C^\sharp)\otimes_{\Oo_F} \Oo_{\breve F}\ \text{ as}\ M=M^\sharp \  (\text{recall \eqref{eq:L V sharp}}),\\
    \Leftrightarrow & \Lambda\subseteq (L_C^\sharp)\otimes_{\Oo_F} \Oo_{\breve F} \ \text{ as } L^\sharp \text{ is } \tau \text{-invariant}, \\
    \Leftrightarrow& L\subseteq \Lambda^\sharp \text{ by Lemma \ref{lem:L dual}}.
\end{align*}
This in fact shows that 
\[M\in \cW(L)\Leftrightarrow \cV^o(\Lambda)\subseteq \cW(L).\]
Hence 
\[\cW(L)=\bigcup_{\Lambda\in \mathrm{Vert}(L)} \cV^o(\Lambda)=\bigcup_{\Lambda\in \mathrm{Vert}(L)} \cV(\Lambda)\]
where the last equality follows from (i) and (ii) of Theorem \ref{thm:incidencerelation}. This finishes the proof of the proposition.
\end{proof}

\begin{corollary}\label{cor:Tisintegral}
If $\cZ(\bL)(\bar\bk)$ is non-empty, then $\bL$ is integral, i.e.  $h(\bx,\by)\in \Oo_F$ for any $\bx,\by\in \bL$.
\end{corollary}
\begin{proof}
By Proposition \ref{prop:startinglemma}, there exists an $M\in \cV$ such that $L\subseteq M$. By Lemma \ref{lem:equivalenceofHermitianforms}, we have
\[h(\bx,\by)=\frac{(\bx(e_1),\by(e_1))_\bX}{(e_1,e_1)_\bY}.\]
Since $M=M^\sharp$, we know $(\bx(e_1),\by(e_1))_\bX \in (M,M)_\bX =\Oo_{\breve F}$. Also notice that $(e_1,e_1)_\bY \in \Oo_{F_0}^\times$ by construction. The lemma follows. 
\end{proof}

From now on we assume that $L$ (or $\bL$ equivalently) has rank $n$.
Take a Jordan decomposition of $\bL$ as in \eqref{Jordansplitting}.
By Corollary \ref{cor:Tisintegral}, $\lambda\geq 0$ for all $\lambda$ such that $L_\lambda\neq\{0\}$. We define 
\begin{equation}\label{defineLt}
    \bL_{\geq t}=\obot_{\lambda\geq t} \bL_\lambda,
\end{equation}
and
\begin{equation}\label{eq:m(L)}
    m(\bL)=\mathrm{rank}_{\Oo_F}(\bL_{\geq1}).
\end{equation}
Also define
\begin{equation}\label{eq:definenodd}
    n_{\mathrm{odd}}=\sum_{\lambda\geq 3,\lambda \text{ is odd}}\mathrm{rank}_{\Oo_F}(\bL_\lambda), \text{ and }
    n_{\mathrm{even}}=\sum_{\lambda\geq 2,\lambda \text{ is even}}\mathrm{rank}_{\Oo_F}(\bL_\lambda).
\end{equation}

We say a Hermitian $F$-vector space $V$ of even dimension is split if it is isomorphic to sum of copies of $H(0)\otimes_\Z \Q$. Equivalent it is split if and only if $(-1)^{n(n-1)/2} \mathrm{det} (V)\in \mathrm{Nm}_{F/F_0} F^\times$ where $n$ is the dimension of $V$.
The following theorem is the analog of \cite[Theorem 4.2]{KR1}.
\begin{theorem}\label{thmA}
Assume that $\bL\subset \bV$ has rank $n$ and is integral. Define $L$ as in \eqref{eq:L}. Then
    \[\cZ(\bL)_{\mathrm{red}}=\bigcup_{\{\Lambda\in \mathrm{Vert}(L)\mid t(\Lambda)=\mathfrak{d}(\bL)\}}\cN_\Lambda\]
    where
    \[\mathfrak{d}(\bL):=\left\{\begin{array}{cc}
      m(\bL)-1   & \text{if $m(\bL)$ is odd} \\
      m(\bL)  & \text{if $m(\bL)$ is even and $\bL_{\geq1}\otimes_{\Z} \Q$ is split}\\
      m(\bL)-2  & \text{if $m(\bL)$ is even and $\bL_{\geq1}\otimes_{\Z} \Q$ is non-split}.\\
    \end{array}\right.\]
\end{theorem}
We postpone the proof of Theorem \ref{thmA} and \ref{thmB} below to the Section \ref{subsec:proofofthmA} and \ref{subsec:proofofthmB} respectively.

\begin{corollary}\label{dimensionofcW}
If it is non-empty, $\cZ(\bL)$ is a variety of pure dimension $\frac{1}{2}\mathfrak{d}(\bL)$.
\end{corollary}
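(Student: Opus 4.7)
The plan is to deduce this corollary directly from Theorem \ref{thmA}(2), combined with the geometric description of the Deligne--Lusztig strata $\cN^o_\Lambda$ and their closures $\cN_\Lambda$ recalled at the end of Section \ref{RZspacesection}. Concretely, Theorem \ref{thmA}(2) gives
\[
\cW(L) = \bigcup_{\substack{\Lambda \in \mathrm{Vert}(L) \\ t(\Lambda) = t(L)}} \cV(\Lambda),
\]
and by Theorem 6.10 of \cite{RTW} each $\cV(\Lambda)$ is the set of $\F$-points of the projective variety $\cN_\Lambda$ of dimension $t(\Lambda)/2$.

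First I would check that the index set is finite. The condition $\Lambda \in \mathrm{Vert}(L)$ is equivalent (by taking $\sharp$-duals) to $\Lambda \subseteq L^\sharp$, while the vertex condition $\Lambda^\sharp \subseteq \Lambda$ combined with $L \subseteq \Lambda^\sharp$ forces the chain $L \subseteq \Lambda^\sharp \subseteq \Lambda \subseteq L^\sharp$. Since $L^\sharp/L$ is a finite $\Oo_H$-module, only finitely many such $\Lambda$ can occur.

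Next, the restriction $t(\Lambda) = t(L)$ ensures that every $\cN_\Lambda$ appearing has the same dimension $t(L)/2$. The scheme-theoretic union of these finitely many reduced closed subschemes $\cN_\Lambda \subseteq \bar\cN^0_{\mathrm{red}}$ is a reduced closed subscheme whose set of $\F$-points is exactly $\cW(L)$. Being a finite union of irreducible projective varieties all of the same dimension $t(L)/2$, this scheme is a projective variety of pure dimension $\tfrac{1}{2}t(L)$, which is the desired conclusion.

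The argument is essentially formal and I do not expect any substantial obstacle: all of the real content has been absorbed into Theorem \ref{thmA} itself, whose proof is carried out in Section \ref{proofsection}. The only small verification beyond that is the finiteness of the index set, handled by the lattice sandwich above.
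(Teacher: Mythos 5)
Your proposal is correct and follows essentially the same route as the paper: invoke Theorem \ref{thmA}(2) to write $\cW(L)$ as a union of $\cV(\Lambda)$ over vertex lattices of type $t(L)$, then use the structure theory from \cite{RTW} (the paper cites Propositions 5.5 and 6.7 there, you cite Theorem 6.10 — same content) to conclude each piece is an irreducible projective variety of dimension $t(L)/2$. Your explicit finiteness check via the sandwich $L \subseteq \Lambda^\sharp \subseteq \Lambda \subseteq L^\sharp$ is a detail the paper leaves implicit but is correct and harmless to include.
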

\begin{proof}
The corollary follows from Theorem \ref{thmA} and Theorem \ref{thm:incidencerelation}.
\end{proof}

The following theorem is the analog of \cite[Theorem 4.5]{KR1}.
\begin{theorem}\label{thmB}
Make the same assumption as Theorem \ref{thmA}.
$\cZ(\bL)_{\mathrm{red}}=\cN_\Lambda$ for a unique vertex lattice $\Lambda$ if and only if the following two conditions are satisfied simultaneously
\begin{enumerate}
    \item $n_{\mathrm{odd}}=0$. \
    \item $n_{\mathrm{even}}\leq 1$ or $n_{\mathrm{even}}=2$ and $\bL_{\geq 2}\otimes_\Z \Q$ is non-split.     
\end{enumerate}
\end{theorem}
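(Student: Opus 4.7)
The strategy is to reduce the statement to a purely lattice-theoretic counting problem and then analyze it via the Jordan decomposition of $L$.

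First I would invoke Theorem \ref{thmA} to rewrite $\cW(L) = \bigcup_{\Lambda \in S(L)} \cV(\Lambda)$, where $S(L) := \{\Lambda \in \mathrm{Vert}(L) \mid t(\Lambda) = t(L)\}$. By the structure theorem recalled in Section \ref{RZspacesection}, each $\cV(\Lambda)$ is an irreducible closed subset of $\bar{\cN}^0_{red}$ of dimension $t(\Lambda)/2$, and $\cV(\Lambda_1) \subseteq \cV(\Lambda_2)$ iff $\Lambda_1 \subseteq \Lambda_2$. In particular, all members of $S(L)$ give irreducible pieces of the same dimension, so irreducibility combined with injectivity forces $\cW(L) = \cV(\Lambda)$ for a unique $\Lambda$ precisely when $|S(L)| = 1$ (if $\Lambda_i \subseteq \Lambda$ with $t(\Lambda_i) = t(\Lambda)$ one easily checks $\Lambda_i = \Lambda$ by comparing $\Lambda^\sharp$ and $\Lambda_i^\sharp$). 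The theorem thus reduces to a combinatorial claim about the number of maximal-type vertex super-lattices of $L$.

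Second, I would analyze $S(L)$ via the Jordan decomposition. Any vertex lattice $\Lambda$ splits canonically as $\Lambda = \Lambda_{(0)} \oplus \Lambda_{(-1)}$ with $\Lambda_{(0)}$ being $\pi^0$-modular and $\Lambda_{(-1)}$ being $\pi^{-1}$-modular of rank $t(\Lambda)$; dually, $\Lambda^\sharp = \Lambda_{(0)} \oplus \pi\Lambda_{(-1)}$ is a sum of a $\pi^0$- and a $\pi^1$-modular lattice. The condition $\Lambda \in \mathrm{Vert}(L)$ reads $L \subseteq \Lambda^\sharp$, and matching Jordan components forces $L_0$ to sit as a direct summand of $\Lambda_{(0)}$ (via Lemma \ref{directsummand}), $L_1$ to embed into $\pi\Lambda_{(-1)}$, and each $L_\lambda$ with $\lambda \geq 2$ to be distributed between the two pieces. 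The formula defining $t(L)$ in Theorem \ref{thmA} fixes the total ranks of $\Lambda_{(0)}$ and $\Lambda_{(-1)}$, so the question of $|S(L)|$ becomes how many distributions are consistent with that rank profile.

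Third, I would execute the case analysis in both directions. For necessity, if $n_{odd} > 0$ then some $L_\lambda$ with $\lambda \geq 3$ odd is present; by Theorem \ref{modularlattice} it is a direct sum of hyperbolic planes, each of which carries two distinct isotropic lines. These give two genuinely different placements into $\Lambda_{(0)}$ versus $\pi\Lambda_{(-1)}$ and hence two distinct vertex lattices in $S(L)$. An analogous construction handles $n_{even} \geq 2$ with $L_{\geq 2}$ split, or $n_{even} \geq 3$. For sufficiency, under (1) and (2) the construction of $\Lambda$ from $L$ is forced at every step: the rigidity of the $L_0$ and $L_1$ placements leaves only an anisotropic slice of $L_{\geq 2}$ of rank at most $2$ to absorb, and this slice offers no isotropic line on which to branch.

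The main obstacle will be the sufficiency argument in the borderline case $n_{even} = 2$ with $L_{\geq 2}$ non-split. Here one must prove rigidity of the absorption of an anisotropic Hermitian plane into $\Lambda^\sharp$, using the absence of isotropic vectors together with the explicit normal form for $\pi^{2i}$-modular lattices from Theorem \ref{modularlattice} to pin down the decomposition. Careful bookkeeping of the invariants $dL_\lambda$ and of the interaction between $L_0$, $L_1$, and the higher even components will be the most technical part of the argument.
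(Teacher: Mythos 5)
Your overall strategy matches the paper's: reduce to counting the set $S(L)$ of vertex lattices of maximal type, then do a case analysis via the Jordan decomposition. The reduction in your first paragraph is fine (and essentially what the paper does implicitly). But there are two substantive issues with the case analysis.

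In the necessity direction, your argument for $n_{odd}>0$ and for $n_{even}\geq 2$ with $L_{\geq 2}$ split is correct: Theorem \ref{modularlattice} or the split hypothesis directly hands you a hyperbolic plane summand $H(a)$, and one can build two distinct maximal-type vertex lattices by stretching differently along its two isotropic lines (the paper writes these explicitly as $\Lambda_1 = \Lambda + [\pi^{-a}e_1, \pi^{-1}e_2]$ and $\Lambda_2 = \Lambda + [\pi^{-a}e_2,\pi^{-1}e_1]$). However, when $n_{even}\geq 3$ and $L_{\geq 2}$ is \emph{not} assumed split, there is no hyperbolic summand handed to you, so the ``analogous construction'' does not go through as stated. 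The paper imports an additional ingredient here: Proposition 63:19 of \cite{O}, which says a quadratic space over a local field of dimension $\geq 5$ is isotropic, applied to the trace form of $(,)|_{L_{\geq 2}}$ (of dimension $2n_{even}\geq 6$). Only after this does one get a hyperbolic plane $H(i)$ splitting off $L_{\geq 2}$, after which the isotropic-line construction proceeds. Without this step your necessity argument has a genuine hole at $n_{even}\geq 3$.

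In the sufficiency direction, your paragraph is a reasonable sketch, but there are two places where it is thinner than the paper. First, you informally invoke ``rigidity of the $L_0$ and $L_1$ placements''; the paper turns this into a precise lemma --- any maximal-type $\Lambda$ must contain $L_0\oplus L_1^\sharp$ --- which is the lever that forces uniqueness in the cases $L\simeq L_0\oplus H(1)^n$ and $L\simeq L_0\oplus H(1)^n\oplus(\pi_0^a)$. Second, you correctly flag the anisotropic rank-$2$ case $n_{even}=2$, $L_{\geq 2}$ non-split as the main obstacle, but you leave it entirely undone; the paper resolves it by reducing to $L=((-\pi_0)^a)\oplus(u(-\pi_0)^b)$ with $-u\notin\mathrm{Nm}(H^\times)$, parametrizing candidate $\Lambda$'s by $S\in\mathrm{GL}_2(H)/\mathrm{GL}_2(\Oo_H)$ subject to $\bar{S}^{t}TS\in\mathrm{GL}_2(\Oo_H)$ and $\bar{S}^t T\in M_2(\Oo_H)$, and using the norm map on $\Oo_H^\times$ (ramified case: $\mathrm{Nm}(\Oo_H^\times)=(\Z_p^\times)^2$) together with Hensel's lemma to show $S\in\mathrm{diag}(\pi^a,\pi^b)\mathrm{GL}_2(\Oo_H)$ is forced. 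That computation is the technical heart of the theorem and would need to be carried out for your proof to be complete.
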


\begin{corollary}
$\cZ(\bL)_{\mathrm{red}}$ is an irreducible variety if and only if condition (1) and (2) in Theorem \ref{thmB} are satisfied. 
\end{corollary}
\begin{proof}
By Proposition \ref{prop:startinglemma} and Theorem \ref{thm:incidencerelation}, $\cZ(\bL)_{\mathrm{red}}$ is an irreducible variety if and only if $\cZ(\bL)_{\mathrm{red}}=\cN_\Lambda$ for a unique vertex lattice $\Lambda$. The corollary now follows from Theorem \ref{thmB}.
\end{proof}

\begin{corollary}
The variety $\cZ(\bL)_{\mathrm{red}}$ is zero dimensional if and only if the following conditions are satisfied 
\begin{enumerate}
    \item $n_{\mathrm{odd}}=0$. \ 
    \item $\mathrm{rank}_{\Oo_F}(\bL_1)=0$.\
    \item $n_{\mathrm{even}}\leq 1$ or $n_{\mathrm{even}}=2$ and $\bL_{\geq 2}\otimes_\Z \Q$ is non-split.
\end{enumerate}
If this is the case, then $\cZ(\bL)_{\mathrm{red}}$ is in fact a single point.
\end{corollary}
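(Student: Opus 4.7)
The plan is to derive this corollary as a short consequence of Corollary \ref{dimensionofcW} together with Theorem \ref{thmA} and Theorem \ref{thmB}. By Corollary \ref{dimensionofcW}, the underlying variety of $\cW(L)$ has pure dimension $\tfrac{1}{2}t(L)$ whenever non-empty, so $\cW(L)$ is zero dimensional (and non-empty) exactly when $t(L) = 0$ and $T \in \mathrm{Herm}_n(\Oo_H)$, the latter ensuring via Theorem \ref{thmA}(2) that $\cW(L)$ is non-empty. The core task is therefore to identify when $t(L) = 0$.

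The key observation is that by Theorem \ref{modularlattice}(2), any $\pi^i$-modular lattice with $i$ odd has even rank; in particular both $\mathrm{rank}_{\Oo_H}(L_1)$ and $n_{odd}$ are even, so the parity of $m(L) = \mathrm{rank}_{\Oo_H}(L_1) + n_{odd} + n_{even}$ agrees with that of $n_{even}$. I will run through the three branches of the formula for $t(L)$. When $m(L)$ is odd, $t(L) = 0$ forces $m(L) = 1$, whence $n_{even} = 1$ and $\mathrm{rank}_{\Oo_H}(L_1) = n_{odd} = 0$. When $m(L)$ is even with $L_{\geq 1}$ split, $t(L) = 0$ forces $m(L) = 0$, so all three quantities vanish. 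When $m(L)$ is even with $L_{\geq 1}$ non-split, $t(L) = 0$ forces $m(L) = 2$; here the a priori possibilities $\mathrm{rank}_{\Oo_H}(L_1) = 2$ or $n_{odd} = 2$ are both ruled out by Theorem \ref{modularlattice}(2), which makes a rank-$2$ odd-modular lattice a hyperbolic plane and thus split, contradicting the non-split hypothesis. So the only possibility in this branch is $\mathrm{rank}_{\Oo_H}(L_1) = n_{odd} = 0$, $n_{even} = 2$, with $L_{\geq 1} = L_{\geq 2}$ non-split. The three branches assemble into precisely conditions (1) and (2). The converse is immediate: under (1) and (2) one has $m(L) = n_{even} \in \{0,1,2\}$, and the formula returns $t(L) = 0$ in each case.

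It remains to promote ``zero dimensional'' to ``a single point''. Observe that conditions (1) and (2) here imply conditions (1) and (2) of Theorem \ref{thmB} (the former merely adds the extra vanishing $\mathrm{rank}_{\Oo_H}(L_1) = 0$), so $\cW(L) = \cV(\Lambda)$ for a unique vertex lattice $\Lambda$. Since $t(\Lambda) = t(L) = 0$, the Deligne--Lusztig variety $\cN^o_\Lambda$ of the summary theorem from \cite{RTW} is attached to the trivial symplectic group and consists of a single $\F$-point, which closes the argument. The only mildly delicate step is the case analysis above: one must invoke the structure theorem for odd-modular lattices to exclude the spurious rank-$2$ odd-modular subcases in the non-split branch, but there is no substantive obstacle otherwise.
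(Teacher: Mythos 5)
Your argument is correct and follows essentially the same route as the paper: invoke the even-rank fact for odd-modular lattices (Theorem \ref{modularlattice}) to resolve the three branches of the $t(L)$ formula, conclude the dimension statement from Theorem \ref{thmA} and Corollary \ref{dimensionofcW}, and deduce the single-point claim from Theorem \ref{thmB}. The paper's proof is more terse (it leaves the branch-by-branch bookkeeping implicit), but the substance is identical.
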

\begin{proof}
The first statement of the corollary follows from Theorem \ref{thmA} directly. If this is the case, then $\cZ(\bL)_{\mathrm{red}}$ is a single point by Theorem \ref{thmB}.
\end{proof}

We now proceed to prove Theorem \ref{thmA} and \ref{thmB}. Define $L$ as in \eqref{eq:L}. Then we can replace all conditions on $\bL$ in Theorem \ref{thmA} and \ref{thmB} by the same conditions on $L$. Moreover $\mathfrak{d}(L)=\mathfrak{d}(\bL)$.
\subsection{Proof of Theorem \ref{thmA}}\label{subsec:proofofthmA}
It suffices to show the corresponding statements on $\bar\bk$-points, namely, 
\[\cW(L)=\bigcup_{\{\Lambda\in \mathrm{Vert}(L)\mid t(\Lambda)=\mathfrak{d}(L)\}}\cV(\Lambda).\]
For $\bx\in \bV^n$, fix a Jordan splitting of $L$ as in \eqref{Jordansplitting}. We then have
\[L=L_0\obot L_{\geq 1}, \ L^\sharp =L_0\obot (L_{\geq 1})^\sharp.\]
For any $\Lambda \in \mathrm{Vert}(L)$, by Proposition \ref{prop:startinglemma} we have
\begin{equation}\label{eq:LLambdasequence}
    L\subseteq \Lambda^\sharp \subseteq \Lambda \subseteq L^\sharp. 
\end{equation}
If $L_0\neq \{0\}$ then $ s\Lambda=s\Lambda^\sharp =sL= 0$. By Lemma \ref{directsummand} we can assume 
\[\Lambda = L_0\obot \Lambda'.\]
Then $\Lambda^\sharp=L_0 \obot (\Lambda')^\sharp$ and we have the sequence 
\[L_{\geq 1}\subseteq (\Lambda')^\sharp\subseteq \Lambda' \subseteq (L_{\geq 1})^\sharp.\]
As the map $\Lambda\mapsto \Lambda'$ above is a bijection and $\mathfrak{d}(L)=\mathfrak{d}(L_{\geq 1})$, in order to prove Theorem \ref{thmA} we can without loss of generality assume
\begin{equation}\label{eq:L_0=0}
    L_0=0 \text{ or equivalently } \frac{1}{\pi}L\subseteq L^\sharp
\end{equation}
in the rest of the subsection. Define
\[m:=m(L)=\mathrm{rank}_{\Oo_F} (L_{\geq 1}),\]
which is the same as $\mathrm{rank}_{\Oo_F}(L)$ by assumption \eqref{eq:L_0=0}.
In the rest of the section we simply write $\mathrm{rank}(\Lambda)$ instead of $\mathrm{rank}_{\Oo_F}(\Lambda)$ for an $\Oo_F$-lattice $\Lambda$.

The fact that $\mathfrak{d}(L)$ can be no bigger than the bounds stated in Theorem \ref{thmA} is a restatement of \cite[Lemma 3.3]{RTW}. Our goal is to prove that it can achieve that number. To be more precise, we prove that if $\Lambda\in \mathrm{Vert}(L)$ and $t(\Lambda)<\mathfrak{d}(L)$, then there is a $\Lambda' \in \mathrm{Vert}(L)$ such that $ \Lambda \subset \Lambda'$ (hence $ \cV(\Lambda)\subset \cV(\Lambda')$) and $t(\Lambda')=\mathfrak{d}(L)$.

From now on assume $\Lambda\in \mathrm{Vert}(L)$, namely \eqref{eq:LLambdasequence} holds.
Let $t=t(\Lambda)$, then $\pi \Lambda \subset^{m-t} \Lambda^\sharp \subset^t \Lambda $. Define 
\[r:=\mathrm{dim}_{\bk}\left(\left(\frac{1}{\pi} \Lambda^\sharp \cap L^\sharp\right) /\Lambda\right).\]
Since $\Lambda^\sharp /\left(\frac{1}{\pi} \Lambda^\sharp \cap L^\sharp\right)^\sharp=\Lambda^\sharp /(\pi \Lambda+L)$, we have the following chain of inclusions
\begin{equation}\label{eq:keychain}
    \pi \Lambda +L \subset^r \Lambda^\sharp \subset^t \Lambda \subset^r\frac{1}{\pi} \Lambda^\sharp \cap L^\sharp.
\end{equation}
Our assumption $L=L_{\geq 1}$ implies that $L\subseteq \pi L^\sharp$. This together with \eqref{eq:LLambdasequence} and \eqref{eq:keychain} implies that
\begin{equation}\label{pilambda+L}
    \pi \Lambda+L \subseteq \pi \left(\frac{1}{\pi} \Lambda^\sharp \cap L^\sharp\right)=\Lambda^\sharp \cap \pi L^\sharp.
\end{equation}
Hence
\[\mathrm{dim}_{\bk}\left(\left(\frac{1}{\pi} \Lambda^\sharp \cap L^\sharp\right)/(\pi \Lambda+L)\right)\geq \mathrm{dim}_{\bk}\left(\left(\frac{1}{\pi} \Lambda^\sharp \cap L^\sharp\right)/\pi \left(\frac{1}{\pi} \Lambda^\sharp \cap L^\sharp\right)\right)=m.\]
Notice that the first quotient in the above inequality is indeed a $\bk$ vector space. Combine the above inequality with \eqref{eq:keychain}, we have
\begin{equation}\label{2r+t}
    2r+t\geq m.
\end{equation}

Define a $\bk$-valued symmetric form $S(\cdot,\cdot)$ on the $\bk$-vector space $\frac{1}{\pi}\Lambda^\sharp/\Lambda$ by
\[S(x,y):=\delta\pi_0\langle \pi x,y\rangle .\]
\begin{lemma}
Suppose $\Lambda$ is a vertex lattice in $\mathrm{Vert}(L)$ such that $\mathrm{dim}_{\bk}\left(\frac{1}{\pi} \Lambda^\sharp \cap L^\sharp/\Lambda\right) \geq 3$, then there exists a lattice $\Lambda' \in \mathrm{Vert}(L)$ with $\Lambda \subset \Lambda'$ and $t(\Lambda')> t(\Lambda)$.
\end{lemma}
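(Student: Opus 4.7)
The plan is to mimic the proof of Lemma 4.12 of \cite{KR1}: enlarge $\Lambda$ by a single isotropic direction coming from the $\F_p$-vector space
\[\bar U := \tfrac{1}{\pi}\Lambda^\sharp \cap L^\sharp\big/\Lambda,\]
which by hypothesis has dimension $r\geq 3$ inside the ambient space $\tfrac{1}{\pi}\Lambda^\sharp/\Lambda$ equipped with the restriction of the symmetric $\F_p$-valued form $S(x,y)=p\langle\pi x,y\rangle$.

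First I would produce a nonzero isotropic vector $v\in\bar U$ for $S|_{\bar U}$. This is always possible in dimension $\geq 3$ over $\F_p$ with $p$ odd: if $S|_{\bar U}$ is degenerate one takes $v$ in the radical; otherwise the classification of nondegenerate quadratic forms over finite fields (equivalently Chevalley--Warning) guarantees a nontrivial zero. Lift $v$ to $\tilde v\in\tfrac{1}{\pi}\Lambda^\sharp\cap L^\sharp$ and set $\Lambda':=\Lambda+\Oo_H\tilde v$. Because $\pi\tilde v\in\Lambda^\sharp\subseteq\Lambda$, the $\Oo_H$-module $\Lambda'/\Lambda$ is a one-dimensional $\F_p$-vector space, so $\Lambda\subsetneq\Lambda'$.

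Then I would verify the three required properties. For $\Lambda'\in\mathrm{Vert}(L)$: by construction $\Lambda'\subseteq L^\sharp$, i.e.\ $L\subseteq(\Lambda')^\sharp$. For the vertex axioms $\pi\Lambda'\subseteq(\Lambda')^\sharp\subseteq\Lambda'$: the right inclusion is automatic since $(\Lambda')^\sharp\subseteq\Lambda^\sharp\subseteq\Lambda\subseteq\Lambda'$, while the left one reduces via \eqref{definitionof(,)} to the three bounds $(\Lambda,\Lambda)\subseteq\pi^{-1}\Oo_H$ (from $\Lambda$ being vertex), $(\tilde v,\Lambda)\subseteq\pi^{-1}\Oo_H$ (from $\tilde v\in\tfrac{1}{\pi}\Lambda^\sharp$), and $(\tilde v,\tilde v)\in\pi^{-1}\Oo_H$ (from $S(v,v)=0$). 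For $t(\Lambda')>t(\Lambda)$: the single-step enlargement $\Lambda\subsetneq\Lambda'$ is mirrored under duality by a single-step shrinkage $(\Lambda')^\sharp\subsetneq\Lambda^\sharp$, whence
\[t(\Lambda')=\dim_{\F_p}\bigl(\Lambda'/(\Lambda')^\sharp\bigr)=1+t(\Lambda)+1=t(\Lambda)+2.\]
The main obstacle is the third bound in the vertex check: translating the $\F_p$-isotropy $S(v,v)=0$ into the integrality $(\tilde v,\tilde v)\in\pi^{-1}\Oo_H$ for the Hermitian form, via the relation between $S$, the alternating form $\langle,\rangle'$, and $(,)$ recorded in \eqref{definitionof(,)}. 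Once this compatibility is spelled out, every other step is a routine dimension/duality count.
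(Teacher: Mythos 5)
Your proof is correct and follows essentially the same route as the paper's: produce an isotropic vector in $\tfrac{1}{\pi}\Lambda^\sharp\cap L^\sharp/\Lambda$ for the $\F_p$-valued form (the paper invokes O'Meara 62:1b directly, you split into the degenerate/nondegenerate cases, but these are the same fact), enlarge $\Lambda$ by its preimage, check the vertex conditions, and count $t(\Lambda')=t(\Lambda)+2$ by duality. The only cosmetic difference is that you phrase the integrality check for $\pi\Lambda'\subseteq(\Lambda')^\sharp$ in terms of the Hermitian form $(,)$ while the paper expands the alternating form $\langle\pi\Lambda',\Lambda'\rangle$ term by term; since these are interchangeable via \eqref{definitionof(,)} (and $\langle\tilde v,\tilde v\rangle'=0$ by alternation, so $(\tilde v,\tilde v)=\langle\pi\tilde v,\tilde v\rangle'$), the translation you gesture at is straightforward.
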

\begin{proof}
Recall that every quadratic form on a $\bk$-vector ($\bk$ is finite) space with dimension bigger or equal to three has an isotropic line by the Chevalley-Warning Theorem. Take an isotropic line $\ell$ in $\frac{1}{\pi} \Lambda^\sharp \cap L^\sharp/\Lambda$. Let $\Lambda'=:\mathrm{pr}^{-1}(\ell)$ where $\mathrm{pr}$ is the natural projection $\frac{1}{\pi} \Lambda^\sharp \rightarrow \frac{1}{\pi}\Lambda^\sharp /\Lambda$. The fact that $\ell$ is isotropic just means
\[\delta\pi_0 \langle \pi \Lambda',\Lambda'\rangle \subseteq \pi_0 \Oo_{F_0}.\]
This shows that $\pi \Lambda' \subseteq (\Lambda')^\sharp$. Since $\Lambda \subset^1 \Lambda'$ we have 
\[(\Lambda')^\sharp\subset^1 \Lambda^\sharp \subseteq\Lambda \subset^1 \Lambda'.\]
So $ \Lambda'$ is a vertex lattice and $t(\Lambda')=t(\Lambda)+2$. Since $\Lambda\subseteq L^\sharp$, by the definition of $\Lambda'$, we also have $\Lambda'\subseteq L^\sharp$.
In other words, $\Lambda'\in \mathrm{Vert}(L)$. The lemma is proved.
\end{proof}

By induction using the above lemma and the fact that $\cV(\Lambda)\subset \cV(\Lambda')$ if $\Lambda \subset \Lambda'$ (\cite[Proposition 4.3]{RTW}), we reduce to the case when $r=\mathrm{dim}_{\bk}(\frac{1}{\pi} \Lambda^\sharp \cap L^\sharp /\Lambda) \leq 2$. Also keep in mind that equation (\ref{2r+t}) holds. There are at most four cases when $r\leq 2$ and $t(\Lambda)$ is smaller than the claimed $\mathfrak{d}(L)$ in Theorem \ref{thmA}:
\begin{enumerate}
    \item $m$ is even, $t(\Lambda)=m-2,r=2$;\
    \item $m$ is even, $t(\Lambda)=m-2,r=1$;\
    \item $m$ is even, $t(\Lambda)=m-4,r=2$;\
    \item $m$ is odd, $t(\Lambda)=m-3,r=2$.
\end{enumerate}
We will show that $\Lambda$ can be enlarged to $\Lambda'$ so that $t(\Lambda')=\mathfrak{d}(L)$ case by case.

\noindent
Case (1): we have
\[\Lambda^\sharp \subset^{m-2} \Lambda \subset^2 \frac{1}{\pi } \Lambda^\sharp \cap L^\sharp \subseteq \frac{1}{\pi } \Lambda^\sharp.\]
Since $\Lambda^\sharp \subset^m \frac{1}{\pi} \Lambda^\sharp$, we actually have $\frac{1}{\pi} \Lambda^\sharp \subseteq L^\sharp$. Choose a Jordan splitting of $\Lambda$
\[\Lambda=\Lambda_0\obot \Lambda_{-1}.\]
Then we know rank$(\Lambda_0)=2$, $\Lambda_0^\sharp =\Lambda_0$, rank$(\Lambda_{-1})=m-2$ and $\pi \Lambda_{-1}=\Lambda_{-1}^\sharp$. By Proposition \ref{prop:modularlattice}, $\Lambda_{-1}\otimes_\Z \Q$ is split. If $\Lambda_0\otimes_\Z \Q$ is split, then there exist $ e_1,e_2\in \Lambda_0$ such that $(e_1,e_1)=(e_2,e_2)=0$, $(e_1,e_2)=1$. Define 
\[\Lambda'=\Lambda_{-1}\obot \mathrm{span}\{e_1,\pi^{-1} e_2\}.\]
By definition $\Lambda'\subset \frac{1}{\pi} \Lambda^\sharp$.
By the fact that $\frac{1}{\pi} \Lambda^\sharp \subseteq L^\sharp$, we know that  $\Lambda'\subseteq L^\sharp$. Also
\[(\Lambda')^\sharp=\Lambda_{-1}^\sharp\obot \mathrm{span}\{\pi e_1, e_2\}.\]
So $t(\Lambda')=m$. Hence $t(\Lambda')=\mathfrak{d}(L)$ as stated in Theorem \ref{thmA}. If $\Lambda_0\otimes_\Z \Q$ is nonsplit, then $t(\Lambda)=m-2$ already obtains the number $\mathfrak{d}(L)$ as stated in Theorem \ref{thmA}.

\noindent
Case (2): we have
\[\pi \Lambda +L \subset^1 \Lambda^\sharp \subset^{m-2} \Lambda \subset^1 \frac{1}{\pi} \Lambda^\sharp \cap L^\sharp,\text{ and } \pi \left(\frac{1}{\pi} \Lambda^\sharp \cap L^\sharp\right) \subset^m \frac{1}{\pi}\Lambda^\sharp \cap L^\sharp.\]
We have already seen in equation (\ref{pilambda+L}) that
\[\pi \Lambda+L \subseteq \pi \left(\frac{1}{\pi} \Lambda^\sharp \cap L^\sharp\right)=\Lambda^\sharp \cap \pi L^\sharp.\]
These together imply that in fact $\pi\Lambda+L=\Lambda^\sharp \cap \pi L^\sharp$. But
\[\pi \Lambda +L=\left(\frac{1}{\pi} \Lambda^\sharp \cap L^\sharp\right)^\sharp.\]
So define $\Lambda'=\frac{1}{\pi} \Lambda^\sharp \cap L^\sharp$, we have $t(\Lambda')=m$. This implies that $\Lambda'\otimes_\Z \Q$ is split and $t(\Lambda')=\mathfrak{d}(L)$. 

\noindent
Case (3):
Similar to case (2).

\noindent
Case (4): we have 
\[\Lambda^\sharp \subset^{m-3} \Lambda \subset^2 \frac{1}{\pi } \Lambda^\sharp \cap L^\sharp \subset^1 \frac{1}{\pi} \Lambda^\sharp.\]
Choose a Jordan splitting of $\Lambda$
\[\Lambda=\Lambda_0\obot \Lambda_{-1}.\]
Then we know rank$(\Lambda_0)=3$, $\Lambda_0^\sharp =\Lambda_0$, rank$(\Lambda_{-1})=m-3$, $\Lambda_{-1}=\frac{1}{\pi}\Lambda_{-1}^\sharp$. By assumption there is a basis $\{e_1,e_2,e_3\} $ of $\Lambda_0$ such that 
\[\frac{1}{\pi}e_1, \frac{1}{\pi} e_2 \in \frac{1}{\pi}\Lambda^\sharp \cap L^\sharp,\frac{1}{\pi}e_3 \notin L^\sharp.\]
By changing $\{e_1,e_2\}$ by an $\Oo_F$ linear combination of them, we can assume $(e_i,e_i)=u_i\ (i=1,2)$ for $u_i\in \Oo_{F_0}^\times$ and $(e_1,e_2)=0$. By modifying $e_3$ using linear combinations of $e_1,e_2$ we can in fact assume that under the basis $\{e_1,e_2,e_3\}$, the form $(\cdot,\cdot)|_{\Lambda_0}$ is represented by the diagonal matrix diag$\{u_1,u_2,u_3\}$ with $u_1,u_2,u_3\in \Oo_{F_0}^\times$. This means that 
\[(e_3,L^\sharp)=(e_3,e_3)\Oo_F=\Oo_F \Rightarrow e_3 \in L.\]
But $\frac{1}{\pi} e_3\notin L^\sharp $, these together contradict our assumption \eqref{eq:L_0=0}. In conclusion, case (4) is not possible under the assumption \eqref{eq:L_0=0}.

This finishes the proof of Theorem \ref{thmA}.
\qedsymbol

\subsection{Proof of Theorem \ref{thmB}}\label{subsec:proofofthmB}
Again it suffices to show the corresponding statements on $\bar\bk$-points.
By Theorem \ref{thmA}, $\cW(L)=\cV(\Lambda)$ is true if and only if $\Lambda$ is the unique lattice in Vert$(L)$ with $t(\Lambda)=\mathfrak{d}(L)$. As in the proof of Theorem \ref{thmA}, we can assume \eqref{eq:L_0=0}.
\begin{lemma}
Assume that one of the following conditions holds,
\begin{enumerate}
    \item $n_{\mathrm{even}}\geq 3$ or $n_{\mathrm{even}}=2$ with $L_{\geq 2}\otimes \Q$ split. \
    \item $n_{\mathrm{odd}}\geq 2$. 
\end{enumerate}
Then there is more than one $\Lambda$ in Vert$(L)$ such that $t(\Lambda)=\mathfrak{d}(L)$.
\end{lemma}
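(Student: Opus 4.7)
My plan is to prove non-uniqueness by explicit construction: in each case of the hypothesis I will exhibit two distinct vertex lattices $\Lambda_1 \neq \Lambda_2$ in $\mathrm{Vert}(L)$ both attaining type $t(L)$. The key structural input is that any vertex lattice has its own Jordan decomposition $\Lambda = \Lambda_0 \oplus \Lambda_{-1}$ with $\Lambda_0$ self-dual and $\Lambda_{-1}$ of $\pi^{-1}$-modular type, so $\Lambda^\sharp = \Lambda_0 \oplus \pi \Lambda_{-1}$ is an orthogonal sum of a self-dual and a $\pi^1$-modular piece, with $t(\Lambda) = \mathrm{rank}(\Lambda_{-1})$. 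As in the proof of Theorem \ref{thmA}, I reduce to $L_0 = 0$. The strategy in each case is to isolate a $2$-dimensional Hermitian subspace $V_0 \subseteq L \otimes_{\Oo_H} H$ inside which $L \cap V_0$ admits two distinct $\pi^1$-modular rank-$2$ extensions $N_1, N_2$; combining each with a common vertex completion of $L \cap V_0^\perp$ (obtained by applying Theorem \ref{thmA} to this orthogonal complement) then produces two distinct vertex duals $\Lambda_i^\sharp = (\text{common completion})^\sharp \oplus N_i$, hence distinct $\Lambda_i$ of the required type.

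For the case $n_{odd} \geq 2$: Theorem \ref{modularlattice} forces each $L_\lambda$ with odd $\lambda$ to be split hyperbolic of even rank, so I pick a hyperbolic plane $H(\lambda_0) \subseteq L_{\lambda_0}$ for some odd $\lambda_0 \geq 3$, with basis $\{e, f\}$ satisfying $(e, f) = \pi^{\lambda_0}$ and $(e, e) = (f, f) = 0$. Writing $\lambda_0 = 2k+1$ with $k \geq 1$, I set
\[ N_1 = \pi^{-k}e\,\Oo_H + \pi^{-k}f\,\Oo_H, \qquad N_2 = \pi^{-k-1}e\,\Oo_H + \pi^{1-k}f\,\Oo_H. \]
A direct pairing check confirms both are $\pi^1$-modular planes containing $H(\lambda_0)$, and they differ (for instance $\pi^{-k-1}e \in N_2 \setminus N_1$). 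For the case $n_{even} = 2$ with $L_{\geq 2}$ split, I use the analogous pair with $\lambda_0 = 2k \geq 2$ even in $L_{\geq 2} = H(\lambda_0)$, taking $N_1 = \pi^{-k}e\,\Oo_H + \pi^{1-k}f\,\Oo_H$ and $N_2 = \pi^{1-k}e\,\Oo_H + \pi^{-k}f\,\Oo_H$.

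The remaining case $n_{even} \geq 3$ is the main obstacle. Here individual Jordan pieces $L_\lambda$ for even $\lambda \geq 2$ may be diagonal rather than split hyperbolic (their ranks need not be even), so I cannot directly pull out a hyperbolic plane. My plan is to split into subcases: (3a) two distinct even $\lambda < \lambda'$ have $L_\lambda, L_{\lambda'} \neq 0$, and (3b) a single $L_{\lambda_0}$ has rank $\geq 3$. In (3a) I form $V_0 = \mathrm{span}_H(a, b)$ from orthogonal $a \in L_\lambda$, $b \in L_{\lambda'}$ and produce two inequivalent $\pi^1$-modular extensions via different scalings of the two factors. In (3b), using three orthogonal vectors $a, b, c \in L_{\lambda_0}$, the two constructions come from different choices of which vector is absorbed into $\Lambda_0$ (when $\mathrm{rank}\Lambda_0 \geq 1$) versus which pair is folded into the $\pi^1$-modular part. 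The main difficulty is the split/non-split bookkeeping of each candidate $2$-dimensional subspace $V_0$, which must be handled carefully to guarantee that $\pi^1$-modular extensions of the required rank actually exist and that combining them with the outside completion attains the total type $t(L)$ rather than a smaller value.
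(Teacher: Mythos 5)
Your framework is sound, and your explicit constructions for the cases $n_{odd}\geq 2$ and $n_{even}=2$ with $L_{\geq 2}$ split are correct: the lattices $N_1, N_2$ you write down are indeed $\pi^1$-modular of rank $2$, contain $L\cap V_0$, and are distinct, and gluing $N_i$ (or rather $\pi^{-1}N_i$) to a common maximal-type vertex $\Lambda'$ for $L\cap V_0^\perp$ produces two distinct vertices $\Lambda_i=\Lambda'\oplus\pi^{-1}N_i$ with $t(\Lambda_i)=t(\Lambda')+2=t(L)$ (the last equality because removing the hyperbolic plane $H(\lambda_0)$ from $L_{\geq 1}$ drops $m$ by $2$ and preserves the split/non-split status). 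This is the same underlying mechanism as the paper's proof, though you avoid the paper's preliminary rescaling of the Jordan pieces to the form $L'_0\oplus L'_1\oplus H(a)$.

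There is, however, a genuine gap in the case $n_{even}\geq 3$, and the ``split/non-split bookkeeping'' difficulty you flag is fatal to the plan as written. Any rank-$2$ $\pi^1$-modular lattice is isomorphic to $H(1)$ by Theorem~\ref{modularlattice}, hence split; therefore a rank-$2$ $\pi^1$-modular overlattice $N\supseteq L\cap V_0$ inside $V_0$ can only exist if the $2$-dimensional Hermitian space $V_0$ is itself split. In your subcase (3a), with rank-one Jordan pieces $L_\lambda\cong(\pi_0^{\lambda/2}u_a)$ and $L_{\lambda'}\cong(\pi_0^{\lambda'/2}u_b)$, the class of $-u_a u_b$ in $\Z_p^\times/(\Z_p^\times)^2$ is an invariant independent of the choices of $a,b$, so $V_0=\mathrm{span}_H(a,b)$ may be non-split for every choice, and then no $N$ exists; a similar obstruction arises in (3b). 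The missing ingredient is precisely the isotropy argument the paper uses: since $n_{even}\geq 3$ (and one may assume $n_{odd}=0$, else your case $n_{odd}\geq 2$ already applies), the Hermitian space $L_{\geq 2}\otimes_{\Oo_H}H$ has $H$-dimension at least $3$, so its $\Q_p$-valued trace form has dimension at least $6\geq 5$ and is isotropic by O'Meara $63{:}19$; isotropy of the trace form forces isotropy of the Hermitian form, and a primitive isotropic vector of $L_{\geq 2}$ then splits off a hyperbolic plane $H(\lambda_0)$, $\lambda_0\geq 2$, as an orthogonal direct summand. Once you have that $H(\lambda_0)$, your $V_0$-construction (with the appropriate pair $N_1,N_2$ depending on the parity of $\lambda_0$) finishes the proof.
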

\begin{proof}
Fix a Jordan splitting of  $L$ as in \eqref{Jordansplitting}.
If $n_{\mathrm{odd}}\geq 2$ by Proposition \ref{prop:modularlattice}, we can find a direct summand $H(i), i\geq 2$ of $L$.
If $n_{\mathrm{even}}\geq 3$, scale the sub $\Oo_F$-module $L_{\geq 2}$
to be $\pi^2$-modular to get a new lattice $L'\supseteq L$ such that $L'_{2}$ has rank bigger or equal to $3$. Notice that $\mathfrak{d}(L')=\mathfrak{d}(L)$.
\cite[Proposition 63:19]{OMeara} shows that every quadratic space over a local field with dimension greater or equal to $5$ is isotropic. We apply this to the trace form of $(\cdot,\cdot)|_{L'_2}$ and conclude that there is a maximal element in $L'_2$ that has length zero. Hence there is an $H(i), i\geq 2$ which is a direct summand of $L'_2$. 
Similarly if $n_{\mathrm{even}}=2$ and $L_{\geq 2}\otimes \Q$ is split, we can find a lattice $L'\supseteq L$ such that $\mathfrak{d}(L)=\mathfrak{d}(L')$ and a direct summand $H(i)$ ($i\geq 2$) of $L'$. 
In any case we can find a a lattice $L'\supseteq L$ such that $\mathfrak{d}(L)=\mathfrak{d}(L')$ and 
\[L'=L''\obot H(a),\]
with $a\geq 2$. In particular $\mathrm{Vert}(L')\subseteq \mathrm{Vert}(L)$.

Notice that $\mathfrak{d}(L')=\mathfrak{d}(L'')+2$. By Theorem \ref{thmA}, there is a vertex lattice $\Lambda\in \mathrm{Vert}(L'')$ such that $t(\Lambda)=\mathfrak{d}(L'')$.
Let $\{e_1,e_2\}$ be a basis of $H(a)$ such that $(e_1,e_1)=(e_2,e_2)=0$ and $(e_1,e_2)=\pi^a$. Define
\[\Lambda_1:=\Lambda\obot \mathrm{span}\{\pi^{-a}e_1,\pi^{-1}e_2\},\quad \Lambda_2:=\Lambda\obot \mathrm{span}\{\pi^{-a}e_2,\pi^{-1}e_1\}.\]
Then
\[\Lambda_1^\sharp=\Lambda^\sharp\obot \mathrm{span}\{\pi^{-a+1}e_1,e_2\},\quad \Lambda_2^\sharp=\Lambda^\sharp\obot \mathrm{span}\{\pi^{-a+1}e_2,e_1\}.\]
This shows that $t(\Lambda_1)=t(\Lambda_2)=\mathfrak{d}(L)$ and $\Lambda_1,\Lambda_2\in \mathrm{Vert}(L')$, but $\Lambda_1\neq \Lambda_2$. This proves the lemma.
\end{proof}
This proves the ``only if" part of Theorem \ref{thmB}. To prove the converse, we start with a lemma.

\begin{lemma}\label{lem:L_0L_1inLambda}
Suppose $L=L_0\obot L_1\obot L_{\geq 1}$(Jordan splitting). If $\Lambda$ is a vertex lattice in $\mathrm{Vert}(L)$ such that $t(\Lambda)=\mathfrak{d}(L)$, then $L_0\obot L_1^\sharp \subset \Lambda$.
\end{lemma}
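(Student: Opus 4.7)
The plan is to argue by contradiction. Assume $L_0 \oplus L_1^\sharp \not\subseteq \Lambda$; I will produce a strictly larger vertex lattice $\Lambda' \supsetneq \Lambda$ still lying in $\mathrm{Vert}(L)$, contradicting the maximality $t(\Lambda)=t(L)$ supplied by Theorem \ref{thmA}.

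The inclusion $L_0 \subseteq \Lambda$ is automatic, since $L_0 \subseteq L \subseteq \Lambda^\sharp \subseteq \Lambda$. Hence the only real content is to prove $L_1^\sharp \subseteq \Lambda$. The key structural fact is that $L_1$ is $\pi^1$-modular, so by Theorem \ref{modularlattice} it decomposes as a direct sum of hyperbolic planes $H(1)$, and in particular $L_1^\sharp = \pi^{-1} L_1$. Assume, for contradiction, that $L_1^\sharp \not\subseteq \Lambda$, and set $\Lambda' := \Lambda + L_1^\sharp$, so that the inclusion $\Lambda \subsetneq \Lambda'$ is strict.

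The central step will then be to verify that $\Lambda'$ still lies in $\mathrm{Vert}(L)$, namely (a) $\pi \Lambda' \subseteq (\Lambda')^\sharp$ and (b) $L \subseteq (\Lambda')^\sharp$. Expanding $\Lambda' = \Lambda + \pi^{-1} L_1$ reduces both conditions to checking that each cross term $(\Lambda,\pi\Lambda)$, $(\Lambda, L_1)$, $(L_1, L_1)$, $(L_0,L_1)$, $(L_{\geq 2}, L_1)$ (up to factors of $\pi^{\pm 1}$) lies in $\Oo_H$; these come respectively from $\Lambda$ being a vertex lattice, from $L \subseteq \Lambda^\sharp$, from the $\pi$-modularity $(L_1, L_1) \subseteq \pi \Oo_H$, and from the orthogonality $(L_\mu, L_\nu) = 0$ for $\mu \neq \nu$ of the Jordan summands. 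Finally, once $\Lambda' \in \mathrm{Vert}(L)$ is established, the strict inclusion $\Lambda \subsetneq \Lambda'$ forces $t(\Lambda) < t(\Lambda')$: the Hermitian form pairs $\Lambda'/\Lambda$ non-degenerately with $\Lambda^\sharp/(\Lambda')^\sharp$, so $t(\Lambda') = t(\Lambda) + 2 [\Lambda' : \Lambda]$, contradicting $t(\Lambda) = t(L) = \max\{t(\Lambda'') : \Lambda'' \in \mathrm{Vert}(L)\}$.

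The only real obstacle is the scalar bookkeeping in the pairing computations above, but the orthogonality of the Jordan decomposition together with the identity $L_1^\sharp = \pi^{-1} L_1$ makes this essentially a one-line check for each cross term. No new geometric input on $\cN^0$ is needed beyond what is already in Theorem \ref{thmA}.
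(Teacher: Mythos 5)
Your argument is correct and matches the paper's proof: both pass to $\Lambda' = \Lambda + L_1^\sharp$ (the paper writes $\Lambda + L_0 \oplus L_1^\sharp$, but as you note $L_0 \subseteq \Lambda$ already), verify $\Lambda' \in \mathrm{Vert}(L)$ using the $\pi^1$-modularity of $L_1$ together with the orthogonality of Jordan summands and $\Lambda \subseteq L^\sharp$, and then invoke the maximality $t(\Lambda) = t(L)$ supplied by Theorem~\ref{thmA} to derive a contradiction. The only point you leave implicit is the remaining half of the vertex-lattice condition, $(\Lambda')^\sharp \subseteq \Lambda'$, which is automatic from $(\Lambda')^\sharp \subseteq \Lambda^\sharp \subseteq \Lambda \subseteq \Lambda'$.
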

\begin{proof}
Suppose $L_0\obot L_1^\sharp \not\subset \Lambda$. Let $\Lambda':=\Lambda+ L_0\obot L_1^\sharp$. We have $L_0^\sharp=L_0$ and $\pi L_1^\sharp=L_1$. Then $L^\sharp=L_0\obot\frac{1}{\pi} L_1 \obot L_{\geq 1}^\sharp$ and
\[(\Lambda')^\sharp=\Lambda^\sharp \cap(L_0\obot L_1\obot (L_{\geq 1}\otimes \Q)).\]
Using the above equation and the fact that $\Lambda\in \mathrm{Vert}(L)$, one checks immediately that $\pi \Lambda'\subseteq (\Lambda')^\sharp$. Since $\Lambda\subset\Lambda'$ and $ \Lambda^\sharp \subseteq \Lambda$, we have $ (\Lambda')^\sharp \subseteq \Lambda'$.
Also $ \Lambda'\subseteq L^\sharp$, so $ \Lambda'\in \mathrm{Vert}(L)$. But $t(\Lambda')> t(\Lambda)$, which contradicts the maximality of $t(\Lambda)$ among vertex lattices in $\mathrm{Vert}(L)$.
\end{proof}

Now we assume conditions (1) and (2) of Theorem \ref{thmB} hold. By Proposition \ref{prop:modularlattice}, we have the following three cases
\begin{enumerate}
    \item $L\simeq L_0\obot H(1)^\ell$ \
    \item $L\simeq L_0\obot H(1)^\ell\obot(u(-\pi_0)^a)$ with $a\geq 1$ and $ u\in \Oo_{F_0}^\times$.\
     \item $L\simeq L_0\obot H(1)^\ell\obot(u_1(-\pi_0)^a)\obot (u_2 (-\pi_0)^b)$, where $u_1,u_2\in \Oo_{F_0}^\times ,-u_1 u_2 \notin \mathrm{Nm}_{F/F_0}(F/F_0)$ and $a,b$ are integers greater or equal to $1$.
\end{enumerate}
We need to prove that in each case there is a unique $\Lambda\in \mathrm{Vert}(L)$ such that $t(\Lambda)=\mathfrak{d}(L)$.
Cases (1) follows from Lemma \ref{lem:L_0L_1inLambda} directly (in this case $\Lambda$ has to be $L_0\obot H(1)^\ell$). Case (2) follows from Lemma \ref{lem:L_0L_1inLambda} and simple arguments.
Now we prove (3). By Lemma \ref{lem:L_0L_1inLambda} and Lemma \ref{directsummand}, it suffices to prove the statement for $L=(u_1(-\pi_0)^a)\obot (u_2 (-\pi_0)^b)$ .

Let $L=\mathrm{span}\{e_1,e_2\}$ and $T=$diag$\{u_1(-\pi_0)^a,u_2 (-\pi_0)^b\}$ is the gram matrix of $\{e_1,e_2\}$. 
Suppose $\Lambda=\mathrm{span}\{[e_1,e_2]S\}\in\mathrm{Vert}(L)$ where $S \in \mathrm{GL}_2(F)/\mathrm{GL}_2(\Oo_F)$. 
Since $L\otimes_\Z\Q$ is nonsplit, we must have $\Lambda^\sharp=\Lambda$.
Then $\Lambda^\sharp=[e_1,e_2] T^{-1} {}^t\overline{S}^{-1}$ and 
\[\Lambda^\sharp =\Lambda \Leftrightarrow S^{-1}  T^{-1} {}^t\overline{S}^{-1} \in \mathrm{GL}_2(\Oo_F)  \Leftrightarrow {}^t \overline{S} T S \in \mathrm{GL}_2(\Oo_F)\]
\[L\subseteq \Lambda^\sharp \Leftrightarrow {}^t \overline{S}  T\in M_2(\Oo_F).\]
Apply Proposition \ref{prop:modularlattice} and multiply $S$ on the right by an element in $\mathrm{GL}_2(\Oo_F)$ if necessary, we can assume 
\[{}^t\overline{S} T S=\left(\begin{array}{cc}
    u_1 & 0 \\
    0 & u_2
\end{array}\right)=:T_1.\]
Assume 
\[S=\left(\begin{array}{cc}
\pi^{-a}  &  0\\
0 & \pi^{-b}
\end{array}\right)S_0,\]
then ${}^t\overline{S}_0 T_1 S_0=T_1$. Claim: $S_0\in \mathrm{GL}_2(\Oo_F)$. 
Assume $S_0=\left(\begin{array}{cc}
x  &  y\\
z & w
\end{array}\right)$, then ${}^t\overline{S}_0 T_1 S_0=T_1$ implies that
\begin{align*}
    u_1 x\overline{x}+u_2z\overline{z}=&u_1 \\
    u_1 \overline{y}x+u_2\overline{w}z=&0 \\
    u_1 y\overline{y}+u_2 w\overline{w}=&u_2.
\end{align*}
If $z=0$, then $y=0$ and $x,w\in \Oo_F^\times$. If $x=0$, then $w=0$ and $y,z\in \Oo_{F}^\times$ as $u_1,u_2$ are units.

Now assume that $xz\neq0$.
Suppose $x=x_0 \pi^e$ where $e<0,x_0\in \Oo_F^\times$, then
\[x\overline{x}-1=(-\pi_0)^e(x_0\overline{x}_0-(-\pi_0)^{-e}).\]
Since $F$ is ramified over $F_0$, $\mathrm{Nm}_{F/F_0}(\Oo_F^\times/\Oo_{F_0}^\times)=(\Oo_{F_0}^\times)^2$ by class field theory. As $x_0\overline{x}_0 \in \mathrm{Nm}_{F/F_0}(\Oo_F^\times/\Oo_{F_0}^\times)=(\Oo_{F_0}^\times)^2$, by Hensel's lemma, $x_0\overline{x}_0-(-\pi_0)^{-e}\in \mathrm{Nm}_{F/F_0}(\Oo_F^\times/\Oo_{F_0}^\times) $. Then
\[-\frac{u_2}{u_1}=\frac{x\overline{x}-1}{z\overline{z}}=\frac{(-\pi_0)^e(x_0\overline{x}_0-(-\pi_0)^{-e})}{z \bar{z}}\in \mathrm{Nm}_{F/F_0}(\Oo_F^\times/\Oo_{F_0}^\times),\]
contradicts our assumption on $-u_1 u_2$. This show that $e\geq 0$ and $x\in \Oo_F$, then $z\in \Oo_F$ too.

Similarly $u_1 y\overline{y}+u_2 w\overline{w}=u_2$ implies $y,w\in \Oo_F $ if $yw\neq 0$. This proves the claim that $S_0\in \mathrm{GL}_2(\Oo_F)$. In other words
\[\Lambda=\mathrm{span}\{\pi^{-a} e_1,\pi^{-b} e_2\}.\]
This proves the uniqueness of $\Lambda$ and we finish the proof of Theorem \ref{thmB}.
\qedsymbol

\section{Unitary Shimura varieties}\label{sec:shimuravariety}
In this section we briefly review the definition of an integral model of unitary Shimura variety following \cite[Section 6]{RSZshimura} (see also \cite{RSZdiagonal} and \cite{cho2018basic}). Let $F$ be a CM field over $\Q$ with totally real subfield $F_0$ of index $2$ in it. Let $d=[F_0:\Q]$.
We denote by $a\mapsto \bar{a}$ the nontrivial automorphism of $F/F_0$. Define
\begin{equation}\label{eq:Vram}
    \mathscr{V}_{\mathrm{ram}}=\{\text{finite places } v \text{ of }F_0\mid v\text{ ramifies in }F\}.
\end{equation}
In this paper we assume that $\mathscr{V}_{\mathrm{ram}}$ is nonempty. We also make the assumption as in \cite[Section 6]{RSZshimura} that every $v\in \mathscr{V}_{\mathrm{ram}}$ is unramified over $\Q$ and does not divide $2$.

Fix a totally imaginary element $\sqrt{\Delta}\in F$. Denote by $\Phi_{F_0}$ (resp. $\Phi_F$) the set of real (resp. complex) embeddings of $F_0$ (resp. $F$). Define a CM type of $F$ by 
\begin{equation}
    \Phi=\{\varphi\in\Phi_F\mid \varphi(\sqrt{\Delta})\in \sqrt{-1} \R_{>0}\}.
\end{equation}
We fix a distinguished element $\varphi_0\in \Phi$. For $\varphi \in \Hom_\Q(F,\C)$, denote its complex conjugate by $\bar\varphi$.

\subsection{The Shimura datum}
Define a function $r:\Hom_\Q(F,\C)\rightarrow \Z_{\geq 0}$ by 
\[\varphi\mapsto r_\varphi:=\left\{\begin{array}{cc}
        1 &   \text{ if } \varphi=\varphi_0;\\
        0 & \text{ if } \varphi\in \Phi,\varphi\neq\varphi_0; \\
        n-r_{\bar{\varphi}} & \text{ if } \varphi\notin \Phi.
\end{array}\right.\]
Assume that $W$ is a $n$ dimensional $F$-vector space with a Hermitian form $(\cdot,\cdot)$ such that
\[\mathrm{sig}W_\varphi=(r_\varphi,r_{\bar{\varphi}}), \forall \varphi\in \Phi\]
where $W_\varphi:=W\otimes_{F,\varphi} \C$ and $\mathrm{sig}W_\varphi $ is its signature with respect to $(\cdot,\cdot)$. Let $\rU(W)$ (resp. $\mathrm{GU}(W)$) be the unitary group (resp. general unitary group) of $(W,(\cdot,\cdot))$. Recall that for an $F_0$-algebra $R$, we have 
\[\mathrm{GU}(W)(R)=\{g\in \mathrm{GL}(W\otimes_{F_0}R) \mid (gv,gw)=c(g)(v,w),\forall v,w \in W\otimes_{F_0}R\}.\]
Define the following groups.
\[Z^\Q:=\{z\in\mathrm{Res}_{F/\Q} \bG_m\mid \mathrm{Nm}_{F/F_0}(z)\in \bG_m\},\]
\[G=\mathrm{Res}_{F_0/\Q}\rU(W),\]
\begin{equation}\label{eq:G^Q}
   G^\Q:=\{g\in\mathrm{Res}_{F_0/\Q} \mathrm{GU}(W)\mid c(g)\in \bG_m \}. 
\end{equation}
Notice that
\[Z^\Q(\R)=\{(z_{\varphi})\in (\C^\times)^\Phi\mid |z_\varphi|=|z_{\varphi_0}|,\forall \varphi \in \Phi\}.\]
Define the Hodge map 
\[h_{Z^\Q}: \C^\times \rightarrow Z^\Q(\R), z \mapsto (\bar{z},\ldots,\bar{z}).\]
For each $\varphi\in \Phi$ choose a $\C$-basis of $W_\varphi$ such that $(\cdot,\cdot)$ is given by the matrix $\mathrm{diag}(1_{r_\varphi},-1_{r_{\bar\varphi}})$. Define the Hodge map 
\[h_{\mathrm{GU}(W)}:\C^\times \rightarrow \mathrm{Res}_{F_0/\Q}\mathrm{GU}(W)(\R)\cong \prod_{\varphi\in \Phi} \mathrm{GU}(W_\varphi)\]
by sending $z$ to $ \mathrm{diag}(z\cdot 1_{r_\varphi},\bar{z} \cdot 1_{r_{\bar\varphi}})$ for each $\varphi$ component. Then there exists $h_{G^\Q}:\C^\times \rightarrow G^\Q(\R)$ such that $h_{\mathrm{GU}(W)}$ factors as
\[h_{\mathrm{GU}(W)}=i\circ h_{G^\Q}\] 
where $i:G^\Q(\R)\rightarrow \mathrm{Res}_{F_0/\Q}\mathrm{GU}(W)(\R)$ is the natural inclusion.

Define 
\[\tilde{G}:=Z^\Q\times_{\bG_m} G^\Q\]
where the maps from the factors on the right hand side to $\bG_m$ are $\mathrm{Nm}_{F/F_0}$ and the similitude character $c(g)$ respectively. Notice that the map
\begin{equation}\label{tildeG=ZG}
    \tilde{G} \rightarrow Z^\Q\times G, (z,g)\mapsto (z,z^{-1} g)
\end{equation}
is an isomorphism. We define the Hodge map $h_{\tilde G}$ by 
\[h_{\tilde G}:  \C^\times \rightarrow \tilde{G}(\R), z\mapsto (h_{Z^\Q}(z),h_{G^\Q}(z)).\]
Then $(\tilde{G},h_{\tilde G})$ is a Shimura datum whose reflex field $E\subset \bar{\Q}$ is defined by 
\begin{equation}
    \mathrm{Aut}(\bar{\Q}/E)=\{\sigma \in \mathrm{Aut}(\bar{\Q})\mid \sigma \circ \Phi=\Phi,\sigma^*(r)=r\}.
\end{equation}
\begin{remark}\label{rmk:reflexfield}
$F$ always embeds into $E$ via $\varphi_0$ (\cite[Remark 3.1]{RSZdiagonal}). Furthermore $E=F$ when $F$ is Galois over $\Q$ or when $F=F_0 K$ where $K$ is an imaginary quadratic field over $\Q$ and $\Phi$ is induced from a CM type of $K/\Q$. From now on we identify $F_0$ as a subfield of $E$ via $\varphi_0$.
\end{remark}
For a small enough compact group $K\in \tilde{G}(\bA_f)$, we can define a Shimura variety $S(\tilde{G},h_{\tilde G})_K$ which has a canonical model over the $\Spec E$. We refer to \cite[Section 3]{RSZshimura} for the moduli problem $S(\tilde{G},h_{\tilde G})_K$ represents. 

\subsection{Integral Model}
In this subsection, we define the integral model for $S(\tilde{G},h_{\tilde G})_K$ (as a Deligne-Mumford stack) in terms of a moduli functor for a particular choice of $W$ and $K$. We remark here that all the results in this section is semi-global in natural so we could instead describe our results on semi-global integral models defined as in \cite[Section 4]{RSZshimura} which will allow a wider choices of $W$ and $K$. It takes only slight modifications to adjust our results to the semi-global setting so we leave it to the interested readers. 

For a lattice $\Lambda$ in $W$, we let $\Lambda^\vee$ denote its dual with respect to the symplectic form $\mathrm{tr}_{F/\Q}(\sqrt{\Delta}^{-1}(\cdot,\cdot))$ and $\Lambda^\sharp$ denote its dual with respect to the Hermitian form $(\cdot,\cdot)$. Then we have 
\begin{equation}\label{eq:Lambdadual}
    \Lambda^\vee=\sqrt{\Delta}\partial^{-1} \Lambda^\sharp
\end{equation}
where $\partial$ is the different ideal of $F/\Q$.
From now on we assume that $W$ contains a lattice $\Lambda$ such that $\Lambda^\vee=\Lambda$. Define the compact subgroup $K_{G}\subset G(\bA_f)$ by
\begin{equation}\label{eq:K_G}
  K_G:=\{g\in G(\bA_f)\mid g(\Lambda\otimes \hat{\Z})=\Lambda\otimes \hat{\Z}\}.  
\end{equation}
Also let $K_{Z^\Q}$ be the unique maximal compact subgroup of $Z^\Q(\bA_f)$:
\begin{equation}\label{eq:K_Z}
  K_{Z^\Q}:=\{z\in (\Oo_F\otimes \hat{\Z})^\times\mid \mathrm{Nm}_{F/F_0}(z)\in \hat{\Z}\}.  
\end{equation}
Define the compact subgroup
\begin{equation}\label{eq:K}
    K:=K_{Z^\Q}\times K_G \subset \tilde{G}(\bA_f)
\end{equation}
under the isomorphism \eqref{tildeG=ZG}.

First we define an auxilary moduli functor $\cM_0$ over $\Spec \Oo_E$.  For a locally notherian $\Oo_E$-scheme $S$, we define $\cM_0 (S)$ to be the groupoid of triples $(A_0,\iota_0,\lambda_0)$ where 
\begin{enumerate}
    \item $A_0$ is an abelian scheme over $S$.
    \item $\iota_0: \Oo_F\rightarrow\End (A_0)$ is an $\Oo_F$-action satisfying the Kottwitz condition of signature $((0,1)_{\varphi\in \Phi})$, namely
    \[\mathrm{charpol}(\iota_0(a)\mid \Lie A_0)=\prod_{\varphi\in \Phi}(T-\bar{\varphi}(a)),\forall a\in \Oo_F.\]
    \item $\lambda_0$ is a principal polarization of $A_0$ whose Rosati involution induces on $\Oo_F$ via $\iota_0$ the nontrivial Galois automorphism of $F/F_0$.
\end{enumerate}
A morphism between two objects $(A_0,\iota_0,\lambda_0)$ and $(A'_0,\iota'_0,\lambda'_0)$ is a $\Oo_F$-linear isomorphism $A_0\rightarrow A'_0$ that pulls $\lambda'_0$ back to $\lambda$.

Since we assume $\mathscr{V}_{\mathrm{ram}}$ is nonempty, $\cM_0$ is nonempty (\cite[Remark 3.7]{RSZshimura}). Then $\cM_0$ is a Deligne-Mumford stack, finite and \'etale over $\Spec \Oo_E$ (\cite[Proposition 3.1.2]{howardCM}).
Moreover, we choose a $1$ dimensional $F$ vector space $W_0$ such that $W_0$ has an $\Oo_F$ lattice $\Lambda_0$ with a nondegenerate alternating form $\langle\cdot,\cdot\rangle_0$ satisfying 
\begin{enumerate}
    \item $\langle ax,y\rangle_0=\langle x,\bar{a} y\rangle_0$ for all $a\in \Oo_F$ and $x,y\in \Lambda_0$.
    \item The quadratic form $x\mapsto \langle \sqrt{\Delta}x,x\rangle_0$ is negative definite.
    \item The dual lattice $\Lambda_0^\vee$ of $\Lambda_0$ with respect to $\langle\cdot,\cdot\rangle_0$ is $\Lambda_0$.
\end{enumerate}
Let $(\cdot,\cdot)_0$ be the unique Hermitian form on $W_0$ such that $\mathrm{tr}_{F/\Q}(\sqrt{\Delta}^{-1}(\cdot,\cdot)_0)=\langle\cdot,\cdot\rangle_0$.
Then $(W_0,(\cdot,\cdot)_0)$ determines a certain similarity class $\xi$ of hermtian forms which in turn give us an open and closed substack $\cM_0^\xi$ of $\cM_0$ (see \cite[Lemma 3.4]{RSZdiagonal}).
Define the $F$-vector space 
\begin{equation}\label{eq:V}
  V=\Hom_F(W_0,W)  
\end{equation}
with the Hermitian form $(\cdot,\cdot)_V$ determined by $(\cdot,\cdot)$ and $(\cdot,\cdot)_0$ via 
\begin{equation}\label{eq:(,)_V}
    (x(a),y(b))=(x,y)_V (a,b)_0,\forall x,y\in V, \forall a,b\in W_0.
\end{equation}
The lattice 
\begin{equation}\label{eq:globalL}
    L:=\Hom_{\Oo_F}(\Lambda_0,\Lambda)\subset V
\end{equation}
is a self dual lattice with respect to the Hermitian form $(\cdot,\cdot)_V$.

We define the functor $\cM$ on the category of locally notherian schemes over $\Spec \Oo_E$ as follows. For a scheme $S$ in this category, $\cM(S)$ is the groupoid of tuples $(A_0,\iota_0,\lambda_0,A,\iota,\lambda)$ where
\begin{itemize}
    \item $(A_0,\iota_0,\lambda_0)$ is an object of $ \cM_0^{\xi}(S)$.
    \item $A$ is an abelian scheme over $S$.
    \item $\iota:\Oo_F\rightarrow \End(A)$ is an $\Oo_F$-action satisfying the Kottwitz condition of signature $((1,n-1)_{\{\varphi_0\}},(0,n)_{\Phi\backslash \{\varphi_0\}})$, i.e., for all $a\in \Oo_F$
    \[\mathrm{charpol}(\iota(a)\mid \Lie A)=(T-\varphi_0(a))(T-\bar\varphi_0(a))^{n-1}\prod_{\varphi\in \Phi\backslash \{\varphi_0\}}(T-\bar\varphi(a))^n.\]
    \item $\lambda:A\rightarrow A^\vee$ is a principal polarization whose associated Rosati involution induces on $\Oo_F$ via $\iota$ the nontrivial Galois automorphism of $F/F_0$.
\end{itemize}
We assume further that the tuple $(A_0,\iota_0,\lambda_0,A,\iota,\lambda)$
satisfies the sign condition, the Wedge condition and the Eisenstein condition, all of which are defined with respect to the signature $((1,n-1)_{\{\varphi_0\}},(0,n)_{\Phi\backslash \{\varphi_0\}})$.

\begin{enumerate}[label=(H\arabic*)]
\item\label{item:H1}
The sign condition. Let $s$ be a geometric point of $S$ and $(A_{0,s},\iota_{0,s},\lambda_{0,s},A_s,\iota_s,\lambda_s)$ be the pull back of $(A_0,\iota_0,\lambda_0,A,\iota,\lambda)\in \cM(S)$ to $s$. For every nonsplit place $v$ of $F_0$, we impose 
\begin{equation}\label{eq:signcondition}
    \mathrm{inv}_v^r(A_{0,s},\iota_{0,s},\lambda_{0,s},A_s,\iota_s,\lambda_s)=
    \mathrm{inv}_v(V).
\end{equation}
We need to explain the two factors. We refer to \cite[Appedix A]{RSZdiagonal} for the definition of $\mathrm{inv}_v^r(A_{0,s},\iota_{0,s},\lambda_{0,s},A_s,\iota_s,\lambda_s)$. For $\mathrm{inv}_v(V)$, it is defined by 
\[\mathrm{inv}_v(V)=(-1)^{n(n-1)/2}\mathrm{det}(V_v)\in F_{0,v}^\times/ \mathrm{Nm}_{F_v/F_{0,v}} F_v^\times,\]
where $\mathrm{det}(V_v) $ is the determinant of the Hermitian space $V_v:=V\otimes_{F_0} F_{0,v}$. We call this the invariant of $V$ at $v$.
We remark that when $s$ has characteristic zero, the sign condition is equivalent to the condition that there is an isometry 
\begin{equation}\label{eq:signconditionoriginal}
    \Hom_{\bA_{F,f}}(\hat{V}(A_{0,s}),\hat{V}({A_s}))\cong V\otimes_F \bA_{F,f}
\end{equation}
as Hermitian $\bA_{F,f}$-vector spaces. Here $\hat{V}(A_s)$ (resp. $\hat{V}(A_{0,s})$) is the rational Tate module of $A$ (resp. $A_0$). The space $\Hom_{\bA_{F,f}}(\hat{V}(A_{0,s}),\hat{V}({A_s}))$ is equipped with the Hermitian form (\cite[Section 2.3]{KR2})
\begin{equation}\label{eq:hermitianformonTate}
   h(x,y)=\lambda_0^{-1}\circ y^\vee\circ \lambda \circ x\in \End_{\bA_{F,f}}(\hat{V}(A_{0,s})) \cong\bA_{F,f} 
\end{equation}
where $y^\vee$ is the dual of $y$ with respect to the Weil pairings on $\hat{V}(A_{0,s})\times \hat{V}(A^\vee_{0,s})$ and $\hat{V}(A_s)\times \hat{V}(A^\vee_s)$. Hence the sign condition can be seen as a generalization of \eqref{eq:signconditionoriginal}. See \cite[Remark 6.9]{RSZshimura} for cases when the sign condition can be simplified.
\end{enumerate}
The wedge condition and Eisenstein condition are only needed when $S$ has non-empty special fibers in certain characteristics. We temporarily fix a finite prime $p$ of $\Q$. Fix an embedding $\tilde{\nu}:\bar{\Q}\rightarrow \bar{\Q}_p$. This determines a $p$-adic place $\nu$ of $E$. $\tilde\nu$ induces an identification 
\[\Hom_\Q(F,\bar{\Q})\xrightarrow{\sim} \Hom_{\Q}(F,\bar{\Q}_p): \varphi\mapsto \tilde{\nu}\circ \varphi.\]
Let $\mathscr{V}_p(F)$ be the set of places of $F$ over $p$. For each $w\in \mathscr{V}_p(F)$, define
\begin{equation}
    \Hom_w(F,\bar\Q):=\{\varphi \in \Hom_\Q(F,\bar\Q) \mid \tilde{\nu}\circ \varphi \text{ induces }w \}.
\end{equation}
Let $F^t_w$ be the maximal unramified extension of $\Q_p$ in $F_w$. For $\psi\in \Hom_{\Q_p}(F^t_w,\bar{\Q}_p)$, define
\begin{equation}
    \Hom_{w,\psi}(F,\bar\Q):=\{\varphi\in \Hom_{w}(F,\bar{\Q})\mid \tilde{\nu}\circ\varphi|_{F^t_w}=\psi\}.
\end{equation}
The definitions of $\Hom_w(F,\bar\Q)$ and $\Hom_{w,\psi}(F,\bar\Q)$ depend on the choice of $\tilde\nu$ in general but the partition of $\Hom_\Q(F,\bar\Q)$ into unions of $\Hom_{w,\psi}(F,\bar\Q)$ does not (\cite[equation (5.4)]{RSZshimura}).

We make a base change and assume that $S$ is a scheme over $\Spec\Oo_{E,\nu}$ where $\Oo_{E,\nu}$ is the completion of $\Oo_E$ with respect to the $\nu$-adic topology. Then the $\Oo_F$ action on $A$ induces an action of
\[\Oo_F\otimes_\Z \Z_p\cong \prod_{w\in \mathscr{V}_p(F)}\Oo_{F,w}\]
on $\Lie A$. Hence we have a decomposition 
\begin{equation}\label{eq:Liew}
    \Lie A=\bigoplus_{w\in \mathscr{V}_p(F)} \mathrm{Lie}_w A.
\end{equation}
For each $w$, the $\Oo_{F^t_w}$-action on $\mathrm{Lie}_w A$ induces a decomposition 
\begin{equation}\label{eq:Liewpsi}
    \mathrm{Lie}_w A=\bigoplus_{\psi\in \Hom_{\Q_p}(F^t_w,\bar{\Q}_p)} \mathrm{Lie}_{w,\psi} A.
\end{equation}
Here we make a further base change to $\Spec \Oo_{\breve{E}_\nu}$ where $\breve{E}_\nu$ is the completion of the maximal unramified extension of $E_\nu$ in $\bar{\Q}_p$.
\begin{enumerate}[label=(H\arabic*)]\setcounter{enumi}{1}
\item\label{item:H2} The wedge condition. Assume that $w$ is a finite place of $F$ that is ramified over $F_0$. We further assume that the underlying place of $w$ in $\Q$ is $p$ and we make a base change so that $S$ is a $\Spec \Z_p$-scheme. The wedge condition is only needed when $S$ has nonempty special fiber over $\Spec\F_p$.
By our assumption, the underlying place $v$ of $F_0$ is unramified over $\Q$. Hence $F^t_w=F_{0,v}$ and $ \Hom_{w,\psi}(F,\bar{\Q})=\{\varphi_\psi,\overline{\varphi}_\psi\}$ for all $\psi\in \Hom_{\Q_p}(F^t_w,\overline{\Q}_p)$.
For every $\psi$ such that $r_{\varphi_\psi}\neq r_{\overline{\varphi}_\psi}$, decompose $\Lie A$ as in \eqref{eq:Liew} and \eqref{eq:Liewpsi} and impose the wedge condition of \cite{P} (compare with Definition \ref{def:signatureconditions}):
\begin{equation}
    \bigwedge^{r_{\overline{\varphi}_\psi}+1}(\iota(a)-\varphi_\psi(a)\mid \mathrm{Lie}_{w,\psi} A)=0, \ \bigwedge^{r_{{\varphi}_\psi}+1}(\iota(a)-\overline{\varphi}_\psi(a)\mid \mathrm{Lie}_{w,\psi} A)=0
\end{equation}
for all $a\in \Oo_F$. Here since $r_{\varphi_\psi}\neq r_{\overline{\varphi}_\psi}$, $\varphi_\psi$ maps $F_w$ into $E_\nu$, so we can view $\varphi_\psi(a)$ and $\overline{\varphi}_\psi(a)$ as sections in the structure sheaf of the base scheme $S$. 
\item\label{item:H3} The Eisenstein condition. Assume that $w$ is a finite place of $F$ whose underlying place $v$ in $F_0$ is ramified over $\Q$. By our assumption $w$ is unramified over $v$. Again assume that the underlying place of $w$ in $\Q$ is $p$ and we make a base change so that $S$ is a $\Spec \Z_p$-scheme.
Decompose $\Lie A$ as in \eqref{eq:Liew} and \eqref{eq:Liewpsi}.
The Eisenstein condition is a set of conditions on $\mathrm{Lie}_{w,\psi}A$ and is only needed when $S$ has nonempty special fiber over $\Spec\F_p$. We do not describe the condition in detail but instead refer to \cite[Section 5.2, case (1) and (2)]{RSZshimura}.
\end{enumerate}
Finally a morphism between two objects $(A_0,\iota_0,\lambda_0,A,\iota,\lambda)$ and $(A'_0,\iota'_0,\lambda'_0,A',\iota',\lambda')$ is a morphism $(A_0,\iota_0,\lambda_0)\rightarrow (A'_0,\iota'_0,\lambda'_0)$ in $\cM_0^{\xi}(S)$ together with an $\Oo_F$-linear isomorphism $(A,\iota,\lambda)\rightarrow (A',\iota',\lambda')$ that pulls $\lambda'$ back to $\lambda$.

The following Proposition is a partial summarize of \cite[Theorem 3.5, 4.4 and 6.7]{RSZshimura}.
\begin{proposition}
$\cM$ is a Deligne-Mumford stack flat over $\Oo_E$. And 
\[\cM\times_{\Spec \Oo_E} \C=S(\tilde{G},h_{\tilde{G}})_K.\]
Moreover we have:

\noindent(i) $\cM$ is smooth of relative dimension $n-1$ over the open subscheme of $\Spec \Oo_E$ obtained by removing the set $\mathscr{V}_{\mathrm{ram}}(E)$ of finite places $\nu$ of $E$ over $\mathscr{V}_{\mathrm{ram}}$ (see \eqref{eq:Vram}). If $n=1$, then $\cM$ is finite \'etale over all of $\Spec \Oo_E$.

\noindent(ii) If $n\geq 2$, then the fiber of $\cM$ over a place $\nu\in \mathscr{V}_{\mathrm{ram}}(E)$ has only isolated singularities. If $n\geq 3$, then blowing up these isolated points for all $\nu\in\mathscr{V}_{\mathrm{ram}}(E)$ yields a model $\cM^\sharp$ which has semi-stable reduction, hence is regular, over the open subscheme of $\Spec\Oo_E$ obtained by removing all places $\nu\in\mathscr{V}_{\mathrm{ram}}(E)$ that are ramified over $F$. This model $\cM^\sharp$ has a moduli interpretation by \cite{Kr}.
\end{proposition}

\section{Special cycles on the basic locus of unitary Shimura varieties}\label{sec:globalspecialcycle}
\subsection{Definition of the special cycles}
Let $\Herm_m(\Oo_F)$ be the set of $m\times m$ Hermitian matrices with values in $\Oo_F$.  
Let $\Herm_m(\Oo_F)_{\geq 0}$ (resp. $\Herm_m(\Oo_F)_{>0}$) be the subset of totally (i.e. for all archimedean places) positive semidefinite (resp. definite) matrices of $\Herm_m(\Oo_F)$.
We define special cycles as in \cite{KR2} and \cite{RSZshimura}. For a locally notherian scheme $S$ over $\Spec \Oo_E$ and $(A_0,\iota_0,\lambda_0,A,\iota,\lambda)\in \cM(S)$, we have the finite rank locally free $\Oo_F$-module
\[\mathbb{L}(A_0,A):=\Hom_{\Oo_F}(A_0,A).\]
We can define a Hermitian form $h'$ on $\mathbb{L}(A_0,A)$ by assigning for any $x,y\in \mathbb{L}(A_0,A)$
\begin{equation}\label{eq:globalhermitianform}
    h'(x,y)=\iota_0^{-1}(\lambda_0^{-1}\circ y^\vee \circ \lambda \circ x)\in \Oo_F.
\end{equation}
\begin{remark}
The local analogue of $h'(\cdot,\cdot)$ is denoted by $h(\cdot,\cdot)$, see Section \ref{subsec:localspecialcycle}. We use the notation $h'(\cdot,\cdot)$ here to be consistent with \cite{KR2}. 
\end{remark}

\begin{definition}\label{def:global Z T}
Let $T\in \Herm_m(\Oo_F)_{\geq 0}$. The special cycle $\cZ(T)$ is the stack such that for any $\Oo_E$-scheme $S$, $\cZ(T)(S)$ is the groupoid of tuples $(A_0,\iota_0,\lambda_0,A,\iota,\lambda,\bx)$ where $(A_0,\iota_0,\lambda_0,A,\iota,\lambda)\in \cM(S)$ and $\bx=(x_1,\ldots,x_m)\in \mathbb{L}(A_0,A)^m$ such that 
\[h'(\bx,\bx)=(h'(x_i,x_j))=T.\]
\end{definition}
\cite[Proposition 2.9]{KR2} generalizes to our case and shows that tha natural map $\cZ(T)\rightarrow \cM$ is finite and unramified. 

\subsection{Support of the special cycles}
Let $\nu$ be a finite place of $E$ with residue field $k_\nu$ of characteristic $p$. Then $\nu$ determines places $w_0$ of $F$ and $v_0$ of $F_0$ respectively.
For $(A_0,\iota_0,\lambda_0,A,\iota,\lambda)\in \cM(\bar{k}_\nu)$, the $\Oo_F\otimes_\Z \Z_p$-action induces a decomposition of the $p$-divisible group $A[p^\infty]$ and its Dieudonn\'e module
\begin{equation}\label{eq:decompositionofpdivisiblegroupbyw}
   A[p^\infty]=\bigoplus_{w|p} A[w^\infty],\ M(A[p^\infty])=\bigoplus_{w|p} M_{w}(A)
\end{equation}
where $w$ runs over the set of places of $F$ over $p$ and $M_w(A)=M(A[w^\infty])$ for each $w$. 
Each $A[w^\infty]$ admits an $\Oo_{F,w}$ action. 
We say that $(A_0,\iota_0,\lambda_0,A,\iota,\lambda)$ is in the basic locus $\cM_{\nu}^{ss}$ if each $A[w^\infty]$ is isoclinic, i.e., the rational Deudonn\'e module $M_w(A)$ has constant slope for all $w$.

We assume from now on that $T\in \Herm_n(\Oo_F)_{>0}$. 
Then we have the following generalization of \cite[Lemma 2.21]{KR2}.
\begin{lemma}\label{lem:supportofZ}
Assume that $T\in \Herm_n(\Oo_F)_{>0}$. Then $\cZ(T)$ is supported on
\[\bigcup_{\nu} \cM_{\nu}^{ss} \]
where $\nu$ runs over the set of finite places of $E$ whose underlying place of $F_0$ does not split in $F$. 
\end{lemma}
\begin{proof}
The proof is the same as that of \cite[Lemma 8.7]{RSZdiagonal} which is a variant of the proof of \cite[Lemma 2.21]{KR2}.
\end{proof}
For $T\in \Herm_n(\Oo_F)_{>0}$, let $V_T$ be the Hermitian $F$-vector space with gram matrix $T$. Recall that we define a Hermitian vector space $V$ as in \eqref{eq:V}. Define $\mathrm{Diff}(T,V)$ as in \eqref{eq:DiffTVintro} or equivalently
\begin{equation}\label{eq:DiffTV}
    \mathrm{Diff}(T,V):=\{v\text{ is a finite place of }F_0\mid \mathrm{inv}_v(V) \neq \mathrm{inv}_v(V_T)\}.
\end{equation}
Any $v$ in $\mathrm{Diff}(T,V)$ is automatically nonsplit in $F$.
Since $T$ is totally positive definite and $V$ has signature $((n-1,1)_{\{\varphi_0\}},(n,0)_{\Phi\backslash \{\varphi_0\}})$, by Hasse principal, $\mathrm{Diff}(T,V)$ is a finite set of odd cardinality.
The following result generalizes \cite[Proposition 2.22]{KR2}. It should be well-known to experts (c.f.\cite[Section 14.4]{LZ}).
\begin{proposition}\label{prop:precisesupport}
Assume $T\in \Herm_n(\Oo_F)_{>0}$.
\begin{enumerate}
    \item If $|\mathrm{Diff}(T,V)|=\{v_0\}$ where $v_0$ is a finite place of $F_0$, then $\cZ(T)$ is supported on 
\[\bigcup_{\nu\in \mathscr{V}(v_0)} \cM_{\nu}^{ss}\]
where $\mathscr{V}(v_0)$ is the set of places of $E$ over $v_0$.
   \item  $\cZ(T)$ is empty if $|\mathrm{Diff}(T,V)|>1$.
\end{enumerate}
\end{proposition}
\begin{proof}
We prove (1) first.
By Lemma \ref{lem:supportofZ}, we know that $\cZ(T)$ is supported on the basic locus over finite places of $E$. Let $\nu$ be a finite place of $E$ with residue field $k_\nu$ of characteristic $p$ such that $\cZ(T)(\bar{k}_\nu)$ is nonempty. Then $\nu$ determines a place $v_0$ of $F_0$ which does not split in $F$.
Let $(A_0,\iota_0,\lambda_0,A,\iota,\lambda)\in \cZ(T)(\bar{k}_\nu)$. By definition $V_T$ carries the Hermitian form $h'(\cdot,\cdot)$ in \eqref{eq:globalhermitianform}.

When $v$ does not divide $p$, by its definition  $\mathrm{inv}_v^r(A_0,\iota_0,\lambda_0,A,\iota,\lambda)$ is the invariant at $v$ of the Hermitian form $h(\cdot,\cdot)$ defined in \eqref{eq:hermitianformonTate} and is the same as $\mathrm{inv}_v(V)$ by the sign condition. On the other hand,  the invariant at $v$ of the Hermitian form $h(\cdot,\cdot)$ is the same as
$\mathrm{inv}_v(V_T)$ by \cite[Lemma 2.10]{KR2}. 

Now assume $v|p$ and is nonsplit and $w$ is the place of $F$ above $v$. Since the component containing $(A_0,\iota_0,\lambda_0,A,\iota,\lambda)$ has nonempty generic fiber (this is implied for example by \eqref{eq:uniformization} below), \cite[Proposition A1]{RSZdiagonal} tells us that 
\begin{equation}\label{eq:twoinvarethesame}
   \mathrm{inv}_v^r(A_0,\iota_0,\lambda_0,A,\iota,\lambda)=\mathrm{inv}_v(V). 
\end{equation}
On the other hand by \cite[Equation (A.8)]{RSZdiagonal}, we know that  
\[\mathrm{inv}_v^r(A_0,\iota_0,\lambda_0,A,\iota,\lambda)=\mathrm{sgn}(r_{\nu,v})\mathrm{inv}_v(A_0,\iota_0,\lambda_0,A,\iota,\lambda)\]
where by \cite[Equation (A.7)]{RSZdiagonal}
\[\mathrm{sgn}(r_{\nu,v})=\left\{\begin{array}{cc}
    1 & \text{ if } v|p \text{ and }  v\neq v_0, \\
    -1 & \text{ if } v=v_0,
\end{array}\right.\]
and $\mathrm{inv}_v(A_0,\iota_0,\lambda_0,A,\iota,\lambda)$ is the invariant of the Hermitian form on the Dieudonn\'e module (see \eqref{eq:decompositionofpdivisiblegroupbyw}) 
\[\Hom_{F_{w}\otimes_{\Z_p} W(\bar{k}_\nu)}(M_w(A_0)\otimes\Q,M_w(A)\otimes\Q).\]
By Lemma \ref{lem:equivalenceofHermitianforms}, Proposition \ref{prop:comparisonofspecialcycles} and their analogues at inert primes, we know that
\[\mathrm{inv}_v(A_0,\iota_0,\lambda_0,A,\iota,\lambda)=\mathrm{inv}_v(V_T).\]
Hence we have
\[\mathrm{inv}_v(V)=\left\{\begin{array}{cc}
    \mathrm{inv}_v(V_T) & \text{ if } v|p \text{ and }  v\neq v_0, \\
    -\mathrm{inv}_v(V_T) & \text{ if } v=v_0.
\end{array}\right.\]
In conclusion, if $\cZ(T)(\bar{k}_\nu)$ is nonempty we must have 
\[\mathrm{Diff}(T,V)=\{v_0\}.\]
This finishes the proof of the (1).

If $z$ is a geometric point of characteristic zero in $\cZ(T)$, then \eqref{eq:signconditionoriginal} implies that $\mathrm{Diff}(T,V)=\emptyset$. But this is impossible by the signature assumption on $V$ and $V_T$. Hence $\cZ(T)$ has no geometric point of characteristic zero and (2) follows from (1).
\end{proof}

\subsection{Uniformization of the basic locus and special cycles}
We now fix a place $\nu$ of $E$ over $w_0$ of $F$ and $v_0\in \mathscr{V}_{\mathrm{ram}}$ of $F_0$. Let $\breve{E}_\nu$ be the completion of the maximal unramified extension of $E_\nu$. We also denote by $\widehat{\cM}_{\nu}^{ss}$ the completion of $\cM\times_{\Spec \Oo_{E}}\Spec \Oo_{\breve{E}_\nu}$ along its basic locus $\cM_{\nu}^{ss}\times_{\Spec k_\nu}\Spec \bar{k}_\nu$. 

\begin{lemma}\label{lem:nonemptysupersingularlocus}
$\cM_\nu^{ss}(\bar{k}_\nu)$ is nonempty.
\end{lemma}
\begin{proof}
The proof is a variant of that of \cite[Lemma 5.1]{KR2}. 
Let $(A_0,\iota_0,\lambda_0)\in \cM_{0}^{\xi}(\Oo_{\breve{E}_\nu})$. Also let $(A_1,\iota_1,\lambda_1)$ be defined similarly as $ (A_0,\iota_0,\lambda_0)$ except that we change the signature from $(0,1)_{\Phi}$ to $((1,0)_{\{\varphi_0\}},(0,1)_{\Phi\backslash\{\varphi_0\}})$. Both abelian schemes have good reduction at $\nu$ by the smoothness of $\cM_0$. Define 
\[(A,\iota,\lambda'):=(A_0,\iota_0,\lambda_0)^{n-1}\oplus(A_1,\iota_1,\lambda_1).\]
Then $(A_0,\iota_0,\lambda_0,A,\iota,\lambda')\in \cM^{V'}(\Oo_{\breve{E}_\nu})$ where $\cM^{V'}$ has the same definition as $\cM$ except that in the sign condition we replace $V$ by some Hermitian space $V'$ over $F$ with the same signature as $V$. 

From now on we base change to $\Spec \bar{k}_\nu$ and for simplicity denote the base change of $(A_0,\iota_0,\lambda_0,A,\iota,\lambda')$ by the same notation. Then  $(A_0,\iota_0,\lambda_0,A,\iota,\lambda')\in \cM^{V',ss}_\nu$.
Define
\begin{equation}\label{eq:adjustlambda}
   \lambda:=\lambda'\circ (\iota(a/b),1,\ldots,1) 
\end{equation}
where $a,b\in F_0$ represent $\mathrm{det}(V)$ and $\mathrm{det}(V')$ respectively.
Since $V$ and $V'$ have the same the signature over the archimdedean places, $\frac{a}{b}$ is totally positive, hence $\lambda$ is a quasi-polarization. 
Notice that the Rosati involution induced by $\lambda$ on $F\hookrightarrow\End^0(A)$ is the complex conjugation. By the definition of $\lambda$ and the fact that $(A_0,\iota_0,\lambda_0,A,\iota,\lambda')$ satisfies the sign condition \eqref{eq:signcondition} for $V'$ we know that $(A_0,\iota_0,\lambda_0,A,\iota,\lambda)$ satisfies the sign condition for $V$.

By the $\Oo_{F_0}$-action on $A$, we can decompose the $p$-divisible group $A[p^\infty]$ and the rational Dieudonn\'e module $M(A[p^\infty])$ of $A[p^\infty]$ into 
\begin{equation}\label{eq:decompositionpdivisible}
    A[p^\infty]=\bigoplus_{v|p} A[v^\infty],\ M(A[p^\infty])=\bigoplus_{v|p} M_{v}(A)
\end{equation}
where $v$ runs over places of $F_0$ over $p$ and $M_v(A)=M(A[v^\infty])$ for each $v$. Let $M_{v_0}^{\mathrm{rel}}(A)$ be the relative Dieudonn\'e module of $\mathcal{C}_{\bar{k}_\nu}(A[v_0^\infty])$ where $\mathcal{C}$ is the functor in Theorem \ref{thm:comparison}. Choose an $\breve{\Oo}_{F,w_0}$-lattice $\Lambda_{v_0}^{\mathrm{rel}}\subset M_{v_0}^{\mathrm{rel}}(A)\otimes_\Z \Q$ satisfying the condition in Proposition \ref{cV}. Such choice always exists by the non-emptiness statement in Theorem \ref{thm:incidencerelation} (iii). By Theorem \ref{thm:comparison}, $\Lambda_{v_0}^{\mathrm{rel}}$ determines a self dual lattice $\Lambda_{v_0}\in M_{v_0}(A)\otimes_\Z \Q$. For any other $v\neq v_0$ dividing $p$, we can choose a self dual lattice $\Lambda_v\subset M_{v}(A)\otimes_\Z \Q$ with respect to the symplectic form induced by $\lambda$ in a similar manner. Choose a self dual lattice $\Lambda^p\subset \hat{V}^p(A)$ with respect to the symplectic form on $\hat{V}^p(A)$ induced by $\lambda$. These lattices determines an abelian variety $(B,\iota_B,\lambda_B)$ isogenic to $(A,\iota,\lambda)$ where $\lambda_B$ is a principal polarization. Then $(A_0,\iota_0,\lambda_0,B,\iota_B,\lambda_B)\in \cM_{\nu}^{ss}(\bar{k}_\nu)$.
\end{proof}

By the lemma we can choose a framing object $(A_0^o,\iota_0^o,\lambda_0^o,A^o,\iota^o,\lambda^o) \in \cM_{\nu}^{ss}(\bar{k}_\nu)$. The $p$-divisible group $A^o[p^\infty]$ of $A^o$ then carries an $\Oo_F$-action $\iota^o[p^\infty]$ and a compatible polarization $\lambda^o[p^\infty]$ determined by $\iota^o$ and $\lambda^o$ respectively. Decompose $A^o[p^\infty]$ as in equation \eqref{eq:decompositionpdivisible} we get
\begin{equation}\label{eq:bXglobal}
    (\bX,\iota_\bX,\lambda_\bX):=(A^o[v_0^\infty],\iota^o[v_0^\infty],\lambda^o[v_0^\infty])
\end{equation}
where $\iota^o[v_0^\infty]$ is the $\Oo_{F,w_0}$-action determined by $\iota^o[p^\infty]$ and $\lambda^o[v_0^\infty]$ is the polarization of $A^o[v_0^\infty]$ determined by $\lambda^o[p^\infty]$. 

Let $W'$ be the $n$-dimensional Hermitian vector whose local invariants are the same as $W$ except at $v_0$ and $\varphi_0$ where it has signature $(0,n)$. Associate to $W'$ the group ${G'}^\Q$ as in \eqref{eq:G^Q} where we associate $G^\Q$ to $W$. Also define 
\begin{equation}\label{eq:V'}
    V':=\Hom_{F}(W_0,W')
\end{equation}
together with the naturally defined Hermitian form. Then define $\tilde{G}':=Z^\Q\times_{\mathbb{G}_m} {G'}^\Q$ which is an inner form of $\tilde{G}$. 
Let $\cN'$ be the Rapoport-Zink space of $p$-divisible groups with $\Oo_F$-actions and comptible principal polarizations satisfying the Kottwitz condition, the wedge condition and the Eisenstein condition with respect to the signature $((1,n-1)_{\{\varphi_0\}},(0,n)_{\Phi\backslash \{\varphi_0\}})$, defined by the framing object $(A^o[p^\infty],\iota^o[p^\infty],\lambda^o[p^\infty])$. 
Then we have the following uniformiation theorem.

\begin{theorem}\label{thm:uniformization}
We have 
\begin{equation}
    \cN'=Z^\Q(\Q_p)/K_{Z^\Q,p}\times (\cN \widehat{\times}_{\Spf\Oo_{\breve{F}_{w_0}}} \Spf \Oo_{\breve{E}_\nu})\times \prod_{v\neq v_0} \rU(V)(F_{0,v})/K_{G,v}
\end{equation}
where the product in the last factor is over all places of $F_0$ over $p$ not equal to $v_0$ and $\cN=\cN^{F_{w_0}/\Q_p}_{(1,n-1)}\cong \cN^{F_{w_0}/F_{0,v_0}}_{(1,n-1)}$. Here $\cN^{F_{w_0}/\Q_p}_{(1,n-1)}$ (resp. $\cN^{F_{w_0}/F_{0,v_0}}_{(1,n-1)}$) is defined in Definition \ref{def:RZspace} using the framing objects $(\bX,\iota_\bX,\lambda_\bX)$ in \eqref{eq:bXglobal} (resp. $\mathcal{C}_{\bar\bk_\nu}((\bX,\iota_\bX,\lambda_\bX))$). There is an isomorphism depending on the choice of base point $(A_0^o,\iota_0^o,\lambda_0^o,A^o,\iota^o,\lambda^o)\in \cM_{\nu}^{ss}(\bar{k}_\nu)$:
\begin{equation}\label{eq:uniformization}
    \Theta: \tilde{G}'(\Q)\backslash \cN'\times\tilde{G}(\bA_f^p)/K^p \cong \widehat{M}_\nu^{ss}.
\end{equation}
\end{theorem}
\begin{proof}
Using exactly the same proof of \cite[Lemma 8.16]{RSZdiagonal}, we know that for $v\neq v_0$ above $p$, we have
\[\cN^{F_v/\Q_p}_{(0,n)}\cong \rU(V)(F_{0,v})/K_{G,v}.\]
As a corollary we know that
\begin{equation}
    \cN'=Z^\Q(\Q_p)/K_{Z^\Q,p}\times (\cN^{F_{w_0}/\Q_p}_{(1,n-1)} \widehat{\times}_{\Spf\Oo_{\breve{F}_{w_0}}} \Spf \Oo_{\breve{E}_\nu})\times \prod_{v\neq v_0} \rU(V)(F_{0,v})/K_{G,v}.
\end{equation}
Then \eqref{eq:uniformization} is a special case of \cite[Theorem 6.30]{RZ}.
By Theorem \ref{thm:comparisonRZspaces}, we can also replace $\cN^{F_{w_0}/\Q_p}_{(1,n-1)}$ above by $\cN^{F_{w_0}/F_{0,v_0}}_{(1,n-1)}$.
\end{proof}

\begin{theorem}\label{thm:mainglobal}
Assume that $T\in \Herm_n(\Oo_F)_{>0}$ with $\mathrm{Diff}(T,V)=\{v_0\}$ where $v_0\in \mathscr{V}_{\mathrm{ram}}$ \eqref{eq:cVram}. Assume that $v_0$ is not over $2$ and is unramified over $\Q$. Then $\cZ(T)_{\mathrm{red}}$ is equidimensional of dimension $\frac{1}{2}\mathfrak{d}(L_{v_0})$ where $L_{v_0}$ is any Hermitian lattice over $\Oo_{F,v_0}$ whose gram matrix is $T$ and $\mathfrak{d}(L_{v_0})$ is defined as in Theorem \ref{thmA}. 
\end{theorem}
\begin{proof}
The proof resembles that of \cite[Proposition 11.2]{KR2}. By Proposition \ref{prop:precisesupport}, $\cZ(T)$ is supported on the basic locus over $\nu$ for those finite places $\nu$ of $E$ that induces $v_0$.
Fix such a $\nu$ and let $w_0$ be the place of $F$ above $v_0$. Choose a framing object $(A_0^o,\iota_0^o,\lambda_0^o,A^o,\iota^o,\lambda^o) \in \cM_{\nu}^{ss}(\bar{k}_\nu)$ which determines a supersingular formal $\Oo_{F,w_0}$-module $(\bX,\iota_\bX,\lambda_\bX)$ as in \eqref{eq:bXglobal}. 

Define $V'$ as in \eqref{eq:V'} and $G':=\rU(V')$. By Proposition \ref{prop:precisesupport}, we know that 
\[V'\cong V_T\] 
as Hermitian spaces. In particular, $V'_v\cong V_v$ as a Hermitian space for all finite places $v\neq v_0$ of $F_0$. 
We can thus think of 
\[L^{v_0}:=L\otimes_{\Oo_{F_0}} \hat{\Oo}_{F_0}^{v_0}\] 
(see \eqref{eq:globalL} for the definition of $L$) as a lattice in $V'(\bA^{v_0}_{F_0,f})$. Its stabilizer in $G'(\bA^{v_0}_{F_0,f})$ is $K^{v_0}_G$. On the other hand, by Proposition \ref{prop:comparisonofspecialcycles} we have the following identification.
\[V'_{v_0}\cong\Hom_{\Oo_{F,w_0}}(\bY,\bX)\otimes \Q\cong \Hom_{\Oo_{F,w_0}}(\mathcal{C}_{\bar{k}_\nu}(\bY),\mathcal{C}_{\bar{k}_\nu}(\bX))\otimes \Q =\bV.\]
Let $ \widehat{\cZ(T)}_\nu$ be the closure of $\cZ(T)\times_{\Spec \Oo_{E}}\Spec \Oo_{\breve{E}_\nu}$ in $\widehat{\cM}^{ss}_{\nu}$. Then by Theorem \ref{thm:uniformization} and the fact that $\tilde{G}'=Z^\Q\times \mathrm{Res}_{F_0/\Q} G'$, we have (c.f. \cite[Proposition 6.3]{KR2})
\[\widehat{\cZ(T)}_\nu\cong (Z^\Q(\Q)\backslash Z^\Q(\bA_f)/K_{Z^\Q}) \times \bigsqcup_{g\in G'(F_0)\backslash G'(\bA^{v_0}_{F_0,f})/K^{v_0}_G } \bigsqcup_{\substack{\bx\in \Omega(T)\\ g^{-1}\bx \in (L^{v_0})^n}} \cZ(\bx),\]
where $\cZ(\bx)$ is the special cycle of $\cN^{F_{w_0}/F_{0,v_0}}_{(1,n-1)}$ defined in Definition \ref{def:localspecialcycle} and 
\[\Omega(T):=\{\bx\in (V')^n\mid (\bx,\bx)=T\}.\]
Here we think of $V'$ as a subset of both $\bV$ and $V'(\bA^{v_0}_{F_0,f})$.
The theorem is now a consequence of Theorem \ref{thmA}.
\end{proof}

\bibliographystyle{alpha}
\bibliography{reference}

\end{document}